\newtheorem{theorem}{Theorem}[section]
\newtheorem{lemma}[theorem]{Lemma}
\newtheorem{proposition}[theorem]{Proposition}
\newtheorem{corollary}[theorem]{Corollary}
\newtheorem{conjecture}[theorem]{Conjecture}
\theoremstyle{definition}
\newtheorem*{acknowledgement}{Acknowledgment}
\theoremstyle{remark}
\numberwithin{equation}{section}
\def\Fc{{\mathcal F}}
\def\Pc{{\mathcal P}}
\def\NZQ{\mathbb}               % the font for N,Z,Q,R,C
\def\ZZ{{\NZQ Z}}
\def\RR{{\NZQ R}}
\begin{document}
	%%%%%%%%%%%%%%%%%%%%%%%%%%%%%%%%%%%%%%%%
	%% title
	%%%%%%%%%%%%%%%%%%%%%%%%%%%%%%%%%%%%%%%%
	\title{Best possible lower bounds on the coefficients of Ehrhart polynomials}
	\author{Akiyoshi Tsuchiya}
	\subjclass[2010]{52B20}
	\thanks{
		{\bf Keywords:}
		Ehrhart polynomial, Integral convex polytope, $\delta$-vector.
	}
	\address{Akiyoshi Tsuchiya,
		Department of Pure and Applied Mathematics,
		Graduate School of Information Science and Technology,
		Osaka University,
		Toyonaka, Osaka 560-0043, Japan}
	\email{a-tsuchiya@cr.math.sci.osaka-u.ac.jp}
	\begin{abstract}
		For an integral convex polytope $\Pc \subset \RR^d$, we recall  $L_\Pc(n)=|n\Pc \cap  \ZZ^d|$ the Ehrhart polynomial of $\Pc$.
		Let $g_r(\Pc)$ be the $r$th coefficients of $L_\Pc(n)$ for $r=0,\ldots,d$.
		Martin Henk and Makoto Tagami gave lower bounds on the coefficients $g_r(\Pc)$ in terms of the volume of $\Pc$.
    	They proved that these bounds are best possible for $r \in \{1,2,d-2\}$.
    	We show that these bounds are also optimal for $r=3$ and $d-r$ even and we give a new best possible bound for $r=d-3$.
	\end{abstract}
	\maketitle
	\section*{INTRODUCTION}
	Let $\Pc \subset \RR^d$ be an integral convex polytope, that is, a convex polytope whose vertices have integer coordinates, of dimension $d$.
	Given integers $n=1,2,\ldots,$ we write $L_\Pc(n)$ 
	for the number of integer points belonging to $n\Pc$, 
	where $n\Pc=\left\{n\alpha : \alpha \in \Pc \right\}$.
	In other words,
	$$L_\Pc(n)=|n\Pc \cap  \ZZ^d|, \ \ \ n=1,2,\ldots .$$
	In the late 1950s, Ehrhart succeeded in proving that $L_\Pc(n)$ 
	is a polynomial in $n$ of degree $d$.
	We call $L_\Pc(n)$ the \textit{Ehrhart \ polynomial} of $\Pc$.
	We refer the readers to \cite{Beck,Ehrhart,Hibi}  for an introduction to the theory of Ehrhart polynomials.
    For $r=0,\ldots,d$, let $g_r(\Pc)$ be the $r$th coefficient of $L_\Pc(n)$.
	The following properties are known:
	\begin{itemize}	
		\item[(i)] $g_0=1$;
		\item[(ii)] $g_d(\Pc)=\text{vol}(\Pc)$;
		\item[(iii)] $g_{d-1}(\Pc)=\cfrac{1}{2}\sum\limits_{\Fc \ \text{facet of} \ \Pc}\cfrac{\text{vol}_{d-1}(\Fc)}{\text{det}(\text{aff}\Fc \cap \ZZ^d)}$ (\cite[Theorem 5.6]{Beck});
	\end{itemize}
	where $\text{vol}(\cdot)$ denotes the usual volume and $\text{vol}_{d-1}(\cdot)$ denotes ($d-1$)-dimensional volume,
	and det(aff$\Fc \cap \ZZ^d$) denotes the determinant of the ($d-1$)-dimensional sublattice contained in the affine hull of  $\Fc$.
    All other coefficients $g_r(\Pc)$, $1\leq r \leq d-2 $, have no such known explicit geometric meaning, except for special classes of  polytopes.  
 
 In \cite[Theorem 6]{upper} Ulrich Betke and Peter McMullen proved the following upper bounds on the coefficients $g_r(\Pc)$ in terms of the volume:
 $$g_r(\Pc) \leq (-1)^{d-r}\text{stirl}(d,r)\text{vol}(\Pc)+(-1)^{d-r-1}\cfrac{\text{stirl}(d,r+1)}{(d-1)!}, \  1\leq r \leq d-1,$$
 where stirl($d,i$) denote the Striling numbers of the first kind which can be defined via the identity $\prod_{i=0}^{d-1} (z-i)=\sum_{i=1}^d \text{stirl}(d,i)\,z^i$.
 
 For an integer $i$ and a variable $z$ we consider  the polynomial of degree $d$.
$$ c_i(z)=(z+i)(z+i-1)\cdots(z+i-(d-1))=d!\,\binom{z+i}{d},    $$
 and we denote its $r$th coefficient by $C^d_{r,i}$, $0\leq r\leq d$.
 For instance, $C_{d,i}^d=1$, and for $0\leq i\leq d-1$ we have
 $C_{0,i}^d=0$. For $d\geq 3$ we set  
$$ M_{r,d}=\min\{C_{r,i}^d : 1\leq i\leq d-2\}. $$
In \cite[Theorem 1.1]{lower} Martin Henk and Makoto Tagami proved the following lower bounds on the coefficients $g_r(\Pc)$ in terms of the volume:
\begin{align}
g_r(\Pc) \geq \frac{1}{d!}\left((-1)^{d-r}\text{stirl}(d+1,r+1)+(d!\text{vol}(\Pc)-1)M_{r,d}\right), \  1\leq r \leq d-1.
\end{align}
 	 In general, these bounds  are not best possible.
 	 However, it is known that in the cases $r \in \{1,2,d-2\}$, they are best possible for any volume.
 	
 	 In Section 1, we show that in the case $r=3$ and in the cases $d-r$ is even, these bounds are best possible. 
	 In Section 2, we give the following lower bound on $g_{d-3}(\Pc)$,
 	$$ g_{d-3}(\Pc) \geq \frac{1}{d!}\left(-\text{stirl}(d+1,d-2)+(d!\text{vol}(\Pc)-1)N_{d-3,d}\right),$$
 	where $N_{d-3,d}=\min\left\{C^d_{d-3,i}:\lceil (d-1)/2 \rceil \leq i \leq d-2 \right\}.$
 	 In particular, we show the bound is best possible and $N_{d-3,d} > M_{d-3,d}$, i.e., Henk and Tagami's bound on $g_{d-3}(\Pc)$ is not best possible.

	\section{The cases $r=3$ or $d-r$ is even }
Let $\Pc \subset \RR^d$ be an integral convex polytope of dimension $d$, and let $\partial \Pc$ denote the boundary of $\Pc$.
The generating function of the integral point enumerator, i.e., the formal power series
$$\text{Ehr}_\Pc(t)=1+\sum\limits_{n=1}^{\infty}L_\Pc(n)t^n$$
is called the Ehrhart series of $\Pc$.
Since $L_\Pc(n)$ is a polynomial of degree $d$, by a routine argument, it follows that the Ehrhart series of $\Pc$ can be expressed as a rational function of the form
$$\text{Ehr}_\Pc(t)=\frac{\delta_0+\delta_1t+\cdots + \delta_dt^d}{(1-t)^{d+1}}.$$
The sequence of the coefficients of the polynomial in the numerator 
$$\delta(\Pc)=(\delta_0,\delta_1,\ldots,\delta_d)$$
is called the \textit{$\delta$-vector} of $\Pc$.

The $\delta$-vector has the following properties:
\begin{itemize}
	\item[(i)] $\delta_0=1,\delta_1=|\Pc \cap \ZZ^d|-(d+1)$ and $\delta_d=|(\Pc\setminus \partial \Pc) \cap \ZZ^d|$. Hence, $\delta_1 \geq \delta_d$; 
	\item[(ii)] Each $\delta_i$ is nonnegative (\cite{RS_DRCP});
	\item[(iii)] If $\delta_d \neq 0$, then one has $\delta_1 \leq \delta_i$ for every $1 \leq i \leq d-1$ (\cite{H_LBTEP});
	\item[(iv)] The leading coefficient $(\sum_{i=0}^{d}\delta_i)/d!$ of $L_\Pc(n)$ is equal to the usual volume of $\Pc$, i.e., $d!\text{vol}(\Pc)=\sum_{i=0}^{d}\delta_i$ (\cite[Corollary 3.20, 3.21]{Beck}):
\end{itemize}

There are two well-known inequalities on $\delta$-vectors.
Let $s = \max\left\{i : \delta_i \neq 0 \right\}$.
One inequality is
\begin{align}
\delta_0+\delta_1+\cdots+\delta_{i}\leq \delta_s+\delta_{s-1}+\cdots+\delta_{s-i}, \ \ 0 \leq i \leq \lfloor s/2 \rfloor,
\end{align}
which was proved by Stanley \cite{RS_OHF}, and another one is
\begin{align}
\delta_d+\delta_{d-1}+\cdots+\delta_{d-i}\leq \delta_1+\delta_2+\cdots+\delta_{i+1}, \ \ 0 \leq i \leq\lfloor(d-1)/2\rfloor ,
\end{align}
which appears in the work of Hibi \cite[Remark 1.4]{H_LBTEP}.
Also, there are more recent and more general results on $\delta$-vectors by Alan Stapledon in \cite{Stap}.
%We refer the reader to \cite{RS_ECV1,RS_MPH} for further informations on $\delta$-vectors.

We can express the coefficients $g_r(\Pc)$ of the Ehrhart polynomial $L_\Pc(n)$ by using the $\delta$-vector $\delta(\Pc)$ (\cite[Proof of Theorem 1.1]{lower}).
In fact,
 $$g_r(\Pc)=\frac{1}{d!}\sum\limits_{i=0}^{d}\delta_iC_{r,d-i}^d.$$
 
We repeatedly use the following lemmas in this paper.
\begin{lemma}[{\cite[Theorem 1.1]{Higashitani}}]
	\label{higashitani}
	Let $m,d,k \in \ZZ_{> 0}$ be arbitrary positive integers satisfying
	$m \geq 1, \; d \geq 2 \; \text{ and } \; 
	1 \leq k \leq \lfloor (d+1)/2 \rfloor.$
	Then there exists an integral convex polytope $\Pc$ of dimension $d$
	such that $\delta_0=1,\delta_k=m$ and for each $i \notin \{0,k\}$, $\delta_i=0$, where we let $\delta(\Pc)=(\delta_0,\ldots,\delta_d)$ be the $\delta$-vector of $\Pc.$
\end{lemma}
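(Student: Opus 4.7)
My plan is to construct an explicit $d$-dimensional lattice simplex with the prescribed $\delta$-vector and verify this using the classical description of $\delta$-vectors of lattice simplices. For a lattice simplex $\Pc$ with vertices $v_0, \ldots, v_d$ lifted to $(v_i, 1) \in \ZZ^{d+1}$, one has $\delta_h(\Pc) = |\{p \in \Pi \cap \ZZ^{d+1} : p_{d+1} = h\}|$, where
$$\Pi = \left\{\sum_{i=0}^d \lambda_i (v_i, 1) : 0 \leq \lambda_i < 1\right\}$$
is the half-open fundamental parallelepiped of the cone over the simplex. This reduces the lemma to producing a lattice simplex whose $\Pi$-lattice points all lie at height $0$ or $k$.

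I would proceed in two stages. First, using the lattice pyramid construction --- for a $d'$-dimensional lattice polytope $\Pc'$, the lattice pyramid over $\Pc'$ with a new integer apex satisfies $\text{Ehr}_{\text{Pyr}(\Pc')}(t) = \text{Ehr}_{\Pc'}(t)/(1-t)$, so its $\delta$-vector is that of $\Pc'$ with a trailing $0$ appended --- I would reduce to the minimal admissible dimension $d_0 = 2k - 1$. Then in dimension $d_0$, I would take the simplex with vertices $v_0 = 0$, $v_i = e_i$ for $1 \leq i \leq d_0 - 1$, and $v_{d_0} = (a_1, \ldots, a_{d_0-1}, m+1)$. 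The last coordinate $m+1$ automatically gives the correct normalized volume $m+1 = \sum_i \delta_i$. Each lattice point of $\Pi$ then corresponds to $\lambda_{d_0} = j/(m+1)$ for some $j \in \{0, \ldots, m\}$, with $\lambda_i \equiv -a_i \lambda_{d_0} \pmod 1$ for $i \in \{1,\ldots,d_0-1\}$ and $\lambda_0 \in [0,1)$ chosen to make the resulting height integral. The goal is to pick the tuple $(a_1, \ldots, a_{d_0-1})$ so that the height equals exactly $k$ for every $j = 1, \ldots, m$.

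The main obstacle is this choice of residue pattern modulo $m+1$. A naive attempt placing $k-1$ entries equal to $-1$ and the remaining entries equal to $0$ yields heights $\lceil kj/(m+1) \rceil$, which scatter across $\{1, \ldots, k\}$ when $m \geq 2$ and therefore fail. A workable pattern --- for instance $a_1 = m$, $a_2 = \cdots = a_{d_0-1} = 1$, which one can verify realizes the target $\delta$-vector when $k = 3$ by a direct computation in the parallelepiped --- uses all $2k-2$ free coordinates to push the sum of the $\lambda_i$ up to exactly $k$ for each nonzero $j$, and a $k$-dependent variant handles general $k$. That such a pattern exists precisely in the range $k \leq \lfloor (d+1)/2 \rfloor$ is consistent with the necessity of this bound coming from Hibi's inequality~(3), which rules out the desired $\delta$-vector for larger $k$.
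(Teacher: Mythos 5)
First, a point of calibration: the paper does not prove this lemma at all --- it is imported verbatim from Higashitani's paper (Theorem 1.1 there) and used as a black box, so there is no in-paper argument to compare against. Measured against the cited source, your outline is essentially a reconstruction of the original proof: the same half-open parallelepiped description $\delta_h=|\{p\in\Pi\cap\ZZ^{d+1}:p_{d+1}=h\}|$ for a lattice simplex, the same shape of simplex $\operatorname{conv}\bigl(0,e_1,\dots,e_{d-1},(a_1,\dots,a_{d-1},m+1)\bigr)$, and the same reduction of the whole problem to a choice of residues $a_i$ modulo $n:=m+1$. The pyramid reduction to dimension $2k-1$ is harmless but unnecessary, since one can simply set the unused $a_i$ equal to $0$.

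The one genuine gap is your closing clause ``a $k$-dependent variant handles general $k$'': that variant \emph{is} the content of the lemma, and you only exhibit and verify a pattern for $k=3$ (indeed your literal pattern $a_1=m$, $a_2=\cdots=a_{d_0-1}=1$ does not stay constant in height for $k\geq 4$). Fortunately the gap closes in one line and you should write it down. Take $a_1=\cdots=a_{k-1}=m$, $a_k=\cdots=a_{2k-2}=1$, and $a_i=0$ for $i>2k-2$; the hypothesis $k\leq\lfloor(d+1)/2\rfloor$ is exactly the condition $2k-2\leq d-1$ needed to place these entries. A lattice point of $\Pi$ has $\lambda_d=j/n$ for some $0\leq j\leq m$, and then $\lambda_i=\{-a_ij/n\}$ equals $j/n$ for $1\leq i\leq k-1$ (as $m\equiv-1 \bmod n$), equals $1-j/n$ for $k\leq i\leq 2k-2$ when $j\neq0$, and equals $0$ otherwise. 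Hence $\sum_{i=1}^{d}\lambda_i=(k-1)+j/n$, so the height is $\lceil(k-1)+j/n\rceil=k$ for every $j=1,\dots,m$, while $j=0$ gives the origin at height $0$; combined with $\sum_i\delta_i=n$ (the normalized volume) this is exactly the claimed $\delta$-vector. (The correct generalization of your $k=3$ choice, namely $k-2$ copies of $m$ and $k$ copies of $1$, also works: there $\sum_{i\geq1}\lambda_i=k-j/n$, again of height $k$.) The remaining ingredients of your outline --- the parallelepiped formula, the pyramid identity, and the remark that Hibi's inequality (1.2) makes the range of $k$ sharp --- are all correct.
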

\begin{lemma}[{\cite[Lemma 2.2]{lower}}]
	\label{sym}
	$C_{r,i}^d=(-1)^{d-r}C_{r,d-1-i}^d$ for $0\leq i\leq d-1$.
\end{lemma}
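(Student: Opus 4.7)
The plan is to derive the identity directly from the product definition of $c_i(z)$ by substituting $z\mapsto -z$ and relating the result back to $c_{d-1-i}(z)$. Writing the definition as $c_i(z)=\prod_{j=0}^{d-1}(z+i-j)$, the first step is to compute
\[
c_i(-z)=\prod_{j=0}^{d-1}(-z+i-j)=(-1)^{d}\prod_{j=0}^{d-1}(z-i+j),
\]
pulling one sign out of each of the $d$ linear factors.

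The next step, which is the only place any real bookkeeping is required, is to reindex the remaining product so that it matches the definition of $c_{d-1-i}(z)=\prod_{k=0}^{d-1}(z+(d-1-i)-k)$. Setting $k=d-1-j$, the substitution $j\leftrightarrow k$ is a bijection of $\{0,1,\ldots,d-1\}$ with itself, and each factor satisfies $z-i+j=z+(d-1-i)-k$. Therefore
\[
c_i(-z)=(-1)^{d}\prod_{k=0}^{d-1}\bigl(z+(d-1-i)-k\bigr)=(-1)^{d}\,c_{d-1-i}(z).
\]

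Finally, I would read off coefficients of $z^r$ from both sides. The left-hand side expands as $\sum_{r=0}^{d}(-1)^{r}C_{r,i}^{d}z^{r}$, while the right-hand side expands as $\sum_{r=0}^{d}(-1)^{d}C_{r,d-1-i}^{d}z^{r}$. Matching the coefficient of $z^r$ yields $(-1)^{r}C_{r,i}^{d}=(-1)^{d}C_{r,d-1-i}^{d}$, and multiplying through by $(-1)^{r}$ gives the claimed $C_{r,i}^{d}=(-1)^{d-r}C_{r,d-1-i}^{d}$.

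I do not anticipate any genuine obstacle here: the statement is essentially a symmetry of a product of equally-spaced linear factors under $z\mapsto -z$. The only thing to be careful about is the reindexing, but the fact that both $i$ and $d-1-i$ lie in the symmetric range $\{0,\ldots,d-1\}$ makes the identification of the two products transparent.
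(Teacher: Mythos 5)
Your argument is correct: the substitution $z\mapsto -z$ in $c_i(z)=\prod_{j=0}^{d-1}(z+i-j)$ followed by the reindexing $k=d-1-j$ does give $c_i(-z)=(-1)^d c_{d-1-i}(z)$, and comparing coefficients of $z^r$ yields exactly the claimed identity. The paper itself offers no proof, citing the lemma from Henk and Tagami, and your direct verification is the standard one-line symmetry argument one would expect there, so there is nothing to flag.
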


First, we show that in the case $r=3$ the bound (0.1) is best possible for any volume.
In fact,
\begin{theorem}
	\label{3}
	Let $d$ be an integer with $d \geq 6$ and let $\Pc$ be an integral convex polytope of dimension $d$.
	Then
	\begin{displaymath}
	g _3(\Pc) \geq \left\{        
	\begin{aligned} 
	&\frac{1}{d!}\left((-1)^{d-1}\textnormal{stirl}(d+1,4)+(d!\textnormal{vol}(\Pc)-1)C^d_{3,d-3}\right) , & 6 \leq d \leq 9, \\
	&\frac{1}{d!}\left((-1)^{d-1}\textnormal{stirl}(d+1,4)+(d!\textnormal{vol}(\Pc)-1)C^d_{3,d-2}\right), & d \geq 10, \\
	\end{aligned}
	\right.
	\end{displaymath}
	and the bound is best possible for any volume.	
\end{theorem}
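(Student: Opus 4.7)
The plan is to deduce the inequality from Henk and Tagami's lower bound (0.1) taken at $r = 3$. Since $(-1)^{d-3}=(-1)^{d-1}$, that bound reads
\[
g_3(\Pc) \ge \frac{1}{d!}\bigl((-1)^{d-1}\,\text{stirl}(d+1,4) + (d!\,\text{vol}(\Pc) - 1)\,M_{3,d}\bigr),
\]
so the inequality claimed in the theorem is equivalent to the two arithmetic identifications $M_{3,d} = C^d_{3,d-3}$ for $6 \le d \le 9$ and $M_{3,d} = C^d_{3,d-2}$ for $d \ge 10$. Once those are established, the best-possibility statement reduces to exhibiting, for every admissible volume, an integral convex polytope realising equality; the natural candidate comes from Lemma \ref{higashitani}.

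To locate the minimum of $C^d_{3,i}$ on $\{1,\ldots,d-2\}$, expand $c_i(z)=\sum_k \text{stirl}(d,k)(z+i)^k$ and extract the coefficient of $z^3$ to obtain
\[
C^d_{3,i} = \sum_{k=3}^{d}\text{stirl}(d,k)\binom{k}{3}\,i^{k-3},
\]
a polynomial of degree $d-3$ in $i$. Lemma \ref{sym} gives $C^d_{3,i}=(-1)^{d-1} C^d_{3,d-1-i}$, halving the range to analyse: for odd $d$ the list of values is symmetric under $i\mapsto d-1-i$, for even $d$ antisymmetric. For the four finite cases $d\in\{6,7,8,9\}$ the claim is verified by direct tabulation, with the minimum falling at $i=d-3$. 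For $d\ge 10$ the minimum shifts to $i=d-2$, which by the symmetry equals $\pm C^d_{3,1}$, and the task is to show that this boundary value is negative and dominates in absolute value every interior $C^d_{3,i}$ with $2\le i\le \lfloor(d-1)/2\rfloor$. A convenient tool here is the finite-difference identity $C^d_{3,i+1}-C^d_{3,i}=d\,C^{d-1}_{3,i}$, which follows from $d!\binom{z+i+1}{d}-d!\binom{z+i}{d}=d\,(d-1)!\binom{z+i}{d-1}$ and allows an inductive control of the sign pattern of the sequence.

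For the best-possibility statement, set $k=3$ when $6\le d\le 9$ and $k=2$ when $d\ge 10$; both satisfy $1\le k\le \lfloor(d+1)/2\rfloor$ under the stated hypotheses. By Lemma \ref{higashitani}, for every $m\in\ZZ_{>0}$ there exists an integral convex polytope $\Pc$ of dimension $d$ with $\delta_0=1$, $\delta_k=m$, and $\delta_i=0$ otherwise. For such $\Pc$ we have $d!\,\text{vol}(\Pc)=m+1$ and
\[
g_3(\Pc)=\frac{1}{d!}\bigl(C^d_{3,d}+m\,C^d_{3,d-k}\bigr)=\frac{1}{d!}\bigl((-1)^{d-1}\,\text{stirl}(d+1,4)+(d!\,\text{vol}(\Pc)-1)\,M_{3,d}\bigr),
\]
using $C^d_{3,d}=(-1)^{d-1}\text{stirl}(d+1,4)$ and $C^d_{3,d-k}=M_{3,d}$ by the identification above. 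The main obstacle is the comparison in the middle paragraph for $d\ge 10$: proving uniformly in $d$ that the discrete minimum of the degree-$(d-3)$ polynomial $C^d_{3,i}$ on $\{1,\ldots,d-2\}$ sits at the boundary $i=d-2$, and treating the two parities of $d$ separately because Lemma \ref{sym} produces opposite signs in the two cases.
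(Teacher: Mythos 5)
Your overall architecture matches the paper's: reduce the inequality to Henk--Tagami's bound $(0.1)$ at $r=3$, identify $M_{3,d}$ with the stated boundary value $C^d_{3,d-3}$ or $C^d_{3,d-2}$, and realise equality via Lemma \ref{higashitani} with a $\delta$-vector concentrated at $\delta_0=1$ and $\delta_k=d!\,\mathrm{vol}(\Pc)-1$. The best-possibility half of your argument is complete and is essentially identical to the paper's. The gap is in the identification of $M_{3,d}$ for $d\ge 10$, which you yourself flag as ``the main obstacle'' but never actually close. The finite-difference identity $C^d_{3,i+1}-C^d_{3,i}=d\,C^{d-1}_{3,i}$ is correct, but it only tells you where the sequence $i\mapsto C^d_{3,i}$ increases or decreases in terms of the \emph{signs} of the $(d-1)$-dimensional sequence; it cannot by itself compare the \emph{magnitudes} of the competing local minima (the boundary value $C^d_{3,d-2}=(-1)^{d-1}C^d_{3,1}$ against interior values), and that magnitude comparison is exactly what is needed. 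The paper closes this by deriving the closed form
\begin{displaymath}
C^d_{3,k}=(-1)^{d-k-1}\,k!\,(d-k-1)!\left(\sum_{\{i,j\}\subset I_k}\frac{1}{ij}-\sum_{1\le i\le k}\frac{1}{i^2}\right),
\end{displaymath}
with $I_k=\{k+1,\ldots,d-k-1\}$, and then showing $|C^d_{3,1}|>|C^d_{3,k}|$ for all $2\le k\le\lfloor(d+1)/2\rfloor-1$ using $(d-2)!>k!\,(d-k-1)!$ together with $\sum_{i\ge 1}1/i^2=\pi^2/6$. Without some quantitative estimate of this kind your claim that ``the minimum shifts to $i=d-2$'' for $d\ge 10$ is unsupported.

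A second, smaller issue: any such asymptotic estimate comes with a threshold (the paper's works only for $d\ge 19$), so you cannot restrict your direct tabulation to $d\in\{6,7,8,9\}$; the paper must and does verify $6\le d\le 18$ by explicit computation of all $C^d_{3,k}$. Your plan as written leaves the range $10\le d\le(\text{threshold})$ uncovered. Also be careful with the sign of the boundary value: from the closed form one gets $C^d_{3,1}=(-1)^d(d-2)!\bigl(\sum_{\{i,j\}\subset I_1}\tfrac{1}{ij}-1\bigr)$, so $C^d_{3,1}$ is positive only for even $d$, while for odd $d$ one has $C^d_{3,d-2}=C^d_{3,1}<0$; the two parities need to be tracked separately all the way through, as you anticipate but do not carry out.
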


In order to prove Theorem \ref{3}, we use the following lemma.
	\begin{lemma}
		\label{lem3}
		\begin{displaymath}
		M_{3,d}= \left\{
			\begin{aligned}
			& C^d_{3,d-3}, & 6\leq d \leq 9, \\
			& C^d_{3,d-2}, & d \geq 10.\\
			\end{aligned}
			\right.
		\end{displaymath}
	\end{lemma}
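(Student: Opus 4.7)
The plan is to derive a closed form for $C^d_{3,i}$ in terms of harmonic numbers and then identify which $i$ minimizes it. Since $1 \le i \le d-2$, the value $z=0$ is one of the roots of $c_i(z) = \prod_{j=0}^{d-1}(z+i-j)$, so one may factor $c_i(z) = z\,q_i(z)$ with
\[
q_i(z) \;=\; \prod_{k=1}^{i}(z+k)\;\cdot\;\prod_{k=1}^{d-1-i}(z-k),
\]
giving $C^d_{3,i} = [z^2]\,q_i(z)$. Expanding this product and using the classical identities $e_n(1,\ldots,n) = n!$, $e_{n-1}(1,\ldots,n) = n!\,H_n$, and $e_{n-2}(1,\ldots,n) = \tfrac{n!}{2}(H_n^{2} - H_n^{(2)})$ for the elementary symmetric polynomials of $\{1,\ldots,n\}$, a short computation yields
\[
C^d_{3,i} \;=\; \frac{(-1)^{d-1-i}}{2}\,i!\,(d-1-i)!\,\bigl[(H_{d-1-i}-H_i)^{2} - H_{i}^{(2)} - H_{d-1-i}^{(2)}\bigr],
\]
where $H_n := \sum_{k=1}^{n}\tfrac{1}{k}$ and $H_n^{(2)} := \sum_{k=1}^{n}\tfrac{1}{k^2}$. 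The bracket is symmetric in $i$ and $d-1-i$, in agreement with Lemma~\ref{sym}.

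For $6 \le d \le 9$, I would simply evaluate this formula at every $i \in \{1,\ldots,d-2\}$ and check by direct computation that the minimum equals $C^d_{3,d-3}$. For $d \ge 10$ the plan is to show that $C^d_{3,d-2}$ is the minimum. Using the estimates $H_{d-2}-1 \sim \log d$ and $H_{d-2}^{(2)} < \pi^{2}/6$, one first verifies that $D(1,d-2) := (H_{d-2}-1)^{2} - 1 - H_{d-2}^{(2)} > 0$ for every $d \ge 9$; together with the sign $(-1)^{d-1-(d-2)} = -1$ this gives $C^d_{3,d-2} < 0$. It then suffices to prove the uniform inequality
\[
(d-2)!\,D(1,d-2) \;\ge\; i!\,(d-1-i)!\,|D(i,d-1-i)| \quad\text{for every } 2 \le i \le d-3,
\]
after which $C^d_{3,d-2}$ is negative with maximal absolute value, hence the minimum. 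The factorial ratio $(d-2)!/(i!\,(d-1-i)!) = \binom{d-1}{i}/(d-1)$ is always $\ge 1$ and grows rapidly toward the middle, while $|D(i,d-1-i)|$ is bounded by a quantity of the same order $O((\log d)^{2})$ as $D(1,d-2)$.

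The hard part is this uniform inequality, whose tightest cases are $i = 2$ (and symmetrically $i = d-3$), where the factorial advantage is only $(d-2)/2$ while $|D(2,d-3)|$ can exceed $D(1,d-2)$ for moderate $d$. Note that the case $d = 9$ in fact barely violates the bound---the minimum there is attained at $i = d-3 = 6$, not at $d-2 = 7$---so the hypothesis $d \ge 10$ must be used essentially, presumably via sharp harmonic-number estimates comparing $(H_{d-2}-1)^{2} - 1 - H_{d-2}^{(2)}$ with $(H_{d-3}-3/2)^{2} - 5/4 - H_{d-3}^{(2)}$ and the analogous quantities for $3 \le i \le \lfloor(d-1)/2\rfloor$.
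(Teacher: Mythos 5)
Your closed form for $C^d_{3,i}$ is correct (it agrees with the paper's expression $C^d_{3,k}=(-1)^{d-k-1}k!\,(d-k-1)!\bigl(\sum_{\{i,j\}\subset I_k}\tfrac{1}{ij}-\sum_{1\le i\le k}\tfrac{1}{i^2}\bigr)$ once the sums are rewritten in terms of harmonic numbers), and your overall strategy --- show $C^d_{3,d-2}$ is negative and of maximal absolute value, plus a finite check for small $d$ --- is the same as the paper's. But there is a genuine gap: the ``uniform inequality'' $(d-2)!\,D(1,d-2)\ge i!\,(d-1-i)!\,|D(i,d-1-i)|$ for all $2\le i\le d-3$ and all $d\ge 10$ is essentially the whole content of the lemma, and you do not prove it; you label it ``the hard part'' and offer only the observation that the factorial ratio grows while $|D|$ is $O((\log d)^2)$. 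As you yourself note, the inequality fails at $d=9$ and is therefore tight for $d$ just above $10$ (the critical cases $i=2$ and $i=d-3$), so no soft estimate of the form $H_n\sim\log n$, $H_n^{(2)}<\pi^2/6$ can settle $d=10,11,\ldots$ up to some moderate threshold. You neither identify where an asymptotic argument genuinely takes over nor supply the sharp estimate needed below that point.

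This is exactly why the paper does not attempt a clean bound starting at $d=10$: it proves $|C^d_{3,1}|>|C^d_{3,k}|$ (hence, via Lemma \ref{sym}, that $C^d_{3,d-2}$ is the minimum) only for $d\ge 19$, where the crude bounds $\sum_{1\le i\le k}1/i^2<\pi^2/6$ and $k!\,(d-k-1)!<(d-2)!$ leave enough slack, and then disposes of all of $6\le d\le 18$ by direct computation of the $C^d_{3,k}$. If you adopt the same split --- extend your ``simply evaluate'' step from $6\le d\le 9$ up to $6\le d\le 18$, and run your harmonic-number comparison only from $d=19$ on --- your outline closes into a complete proof along the paper's lines. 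As written, the decisive estimate is missing.
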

	\begin{proof}		
		For $1 \leq k \leq \lfloor (d+1)/2\rfloor -1$,
		we set 
		\begin{displaymath}
		\begin{aligned}
		f_k(z)&=(z+k)(z+k-1)\cdots(z+1)z(z-1)\cdots(z-k+1)(z-k),\\
		g_k(z)&=(z-k-1)(z-k-2)\cdots(z+k-d+2)(z+k-d+1).\\
		\end{aligned} 
		\end{displaymath}
		Then $c_k(z)=f_k(z)g_k(z)$.
		Also, for  $1 \leq k \leq \lfloor(d+1)/2 \rfloor -2$,
		we set 
		$$I_k=\left\{i \in \ZZ \ :\  k+1\leq i \leq d-k-1 \right\}.$$
		
		We assume that $d \geq 19$.
	If $1 \leq k \leq \lfloor (d+1)/2 \rfloor -2$, then
	\begin{displaymath}
	\begin{aligned}
	f_k(z)&=(-1)^{k}(k!)^2z+(-1)^{k-1}(k!)^2\sum\limits_{1 \leq i \leq k}
	\cfrac{1}{i^2}z^3+\text{upper terms},\\
	g_k(z)=&(-1)^{d-2k-1}\cfrac{(d-k-1)!}{k!}+(-1)^{d-2k}\frac{(d-k-1)!}{k!}\sum\limits_{i\in I_k}\cfrac{1}{i}z\\
	&+(-1)^{d-2k-1}\cfrac{(d-k-1)!}{k!}\sum\limits_{\left\{i,j\right\}\subset I_k}\cfrac{1}{ij}z^2+\text{upper terms}.
	\end{aligned} 
	\end{displaymath}
Hence for $1 \leq k \leq\lfloor (d+1)/2\rfloor -1$,
$$C^d_{3,k}=(-1)^{d-k-1}k!(d-k-1)!\left(\sum\limits_{\left\{i,j\right\}\subset I_k}\cfrac{1}{ij}-\sum\limits_{1 \leq i \leq k}
\cfrac{1}{i^2}\right).$$
%Also, if $k=\lfloor (d+1)/2\rfloor -1$, then
%\begin{displaymath}
%C_{3,k}^d=\left\{
%\begin{aligned}
%& (-1)^{k}(k!)^2(k+1)\sum\limits_{1 \leq i \leq k}\cfrac{1}{i^2} &(d: \text{even})\\
%& (-1)^{k-1}(k!)^2\sum\limits_{1 \leq i \leq k}\cfrac{1}{i^2} &(d: \text{odd})\\
%\end{aligned}
%\right.
%\end{displaymath}

We show that for $2 \leq k \leq  \lfloor(d+1)/2\rfloor -1$, $|C_{3,1}^d|>|C_{3,k}^d|$.
Since $\sum\limits_{\left\{i,j\right\}\subset I_1}\frac{1}{ij}-1>0$,
we have $|C_{3,1}^d|=(d-2)!(\sum\limits_{\left\{i,j\right\}\subset I_1}\frac{1}{ij}-1).$
%First, we show that the lemma is true for $2 \leq k \leq  \lfloor (d+1)/2\rfloor -2$.
Hence since $(d-2)!>k!(d-k-1)!$ and $d \geq 19$, we have
\begin{displaymath}
\begin{aligned}
& (d-2)!\left(\sum\limits_{\left\{i,j\right\}\subset I_1}\cfrac{1}{ij}-1\right)+k!(d-k-1)!\left(\sum\limits_{\left\{i,j\right\}\subset I_k}\cfrac{1}{ij}-\sum\limits_{1 \leq i \leq k}\cfrac{1}{i^2}\right)\\
%>& (d-2)!(\sum\limits_{2 \leq i < j \leq d-2}\frac{1}{ij}-1)-k!(d-k-1)!\sum\limits_{1 \leq i \leq k}\frac{1}{i^2}\\
%>&(d-2)!(\sum\limits_{\left\{i,j\right\}\subset I_1}\frac{1}{ij}-1-\sum\limits_{1 \leq i \leq k}\frac{1}{i^2})\\
>&(d-2)!\left(\sum\limits_{\left\{i,j\right\}\subset I_1}\cfrac{1}{ij}-1-\sum\limits_{1 \leq i \leq \infty}\cfrac{1}{i^2}\right)\\
=&(d-2)!\left(\sum\limits_{\left\{i,j\right\}\subset I_1}\cfrac{1}{ij}-1-\cfrac{\pi^2}{6}\right)\\
>& 0,
\end{aligned}
\end{displaymath}
and
\begin{displaymath}
\begin{aligned}
& (d-2)!\left(\sum\limits_{\left\{i,j\right\}\subset I_1}\cfrac{1}{ij}-1\right)-k!(d-k-1)!\left(\sum\limits_{\left\{i,j\right\}\subset I_k}\cfrac{1}{ij}-\sum\limits_{1 \leq i \leq k}\cfrac{1}{i^2}\right)\\
%>& (d-2)!(\sum\limits_{2 \leq i < j \leq d-2}\frac{1}{ij}-1)-k!(d-k-1)!\sum\limits_{k+1 \leq i < j \leq d-k-1}\frac{1}{ij}\\
>&(d-2)!\left(\sum\limits_{\left\{i,j\right\}\subset I_1}\cfrac{1}{ij}-1-\sum\limits_{\left\{i,j\right\}\subset I_k}\cfrac{1}{ij}\right)\\
%>&(d-2)!(\sum\limits_{3 \leq i \leq d-2}\frac{1}{2i}+\sum\limits_{k+1 \leq i < j \leq d-k-1}\frac{1}{ij}-1-\sum\limits_{k+1 \leq i < j \leq d-k-1}\frac{1}{ij})\\
%>&(d-2)!\left(\sum\limits_{3 \leq i \leq d-2}\frac{1}{2i}-1\right)\\
>& 0.
\end{aligned}
\end{displaymath}
Therefore, since $|C_{3,1}^d|+C_{3,k}^d>0$ and $|C_{3,1}^d|-C_{3,k}^d>0$, we have $|C_{3,1}^d|>|C_{3,k}^d|$.
%Next, we show that the lemma is true for $k=\lfloor (d+1)/2\rfloor -1$.
%Since $(d-2)!>(k!)^2(k+1)$, we have
%\begin{displaymath}
%\begin{aligned}
%|C^d_{3,1}|-|C^d_{3,k}|& \geq (d-2)!\left(\sum\limits_{\left\{i,j\right\}\subset I_1}\cfrac{1}{ij}-1\right)-(k!)^2(k+1)\sum\limits_{1 \leq i \leq k}\cfrac{1}{i^2}\\
%& >(d-2)!\left(\sum\limits_{\left\{i,j\right\}\subset I_1}\cfrac{1}{ij}-1-\sum\limits_{1 \leq i \leq \infty}\cfrac{1}{i^2}\right)\\
%& > 0.
%\end{aligned}
%\end{displaymath}
%Therefore, for $2 \leq k \leq \lfloor (d+1)/2\rfloor -1$, $|C_{3,1}^d|>|C_{3,k}^d|$.
Thus, since $C_{3,1}^d>0$, by Lemma \ref{sym}, we have $M_{3,d}=C_{3,d-2}^d$.

Finally, for $6 \leq d \leq 18$ and for $1 \leq k \leq d-2$, by computing $C_{3,k}^d$, we have
\begin{displaymath}
M_{3,d}= \left\{
\begin{aligned}
& C^d_{3,d-3}, & 6\leq d \leq 9, \\
& C^d_{3,d-2}, & 10 \leq d \leq 18,\\
\end{aligned}
\right.
\end{displaymath}
as desired.
\end{proof}
Now, we prove Theorem \ref{3}.
\begin{proof}[Proof of Theorem \ref{3}]
We set 
\begin{displaymath}
	j_d=\left\{
	\begin{aligned}
	&2, & 6 \leq d \leq 9,\\
	&1, &d \geq 10.
	\end{aligned}
\right.
\end{displaymath}
Then by Lemma \ref{lem3}, we have $M_{3,d}=C^d_{3,d-1-j_d}$.
Since $j_d+1 \leq \lfloor(d+1)/2\rfloor$, by Lemma \ref{higashitani}, for any volume, there exists an integral convex polytope $\Pc$ such that $\delta_0=1$ and $\delta_{j_d+1}=d!\text{vol}(\Pc)-1$ and for each $i \notin \left\{0,j_d+1\right\}$, $\delta_i=0$,
where we let $\delta(\Pc)=(\delta_0,\ldots,\delta_d)$ be the $\delta$-vector of $\Pc.$
Hence since $C_{3,d}^d=(-1)^{d-1}\text{stirl}(d+1,4)$, we have
\begin{displaymath}
\begin{aligned}
 d!g_3(\Pc)&=\sum\limits_{i=0}^{d}\delta_iC_{3,d-i}^d\\
 &=(-1)^{d-1}\text{stirl}(d+1,4)+(d!\text{vol}(\Pc)-1)C^d_{3,d-1-j_d}\\
 &=(-1)^{d-1}\text{stirl}(d+1,4)+(d!\text{vol}(\Pc)-1)M_{3,d},
\end{aligned}
\end{displaymath}
as desired.
\end{proof}

Next, we show that in the cases that $d-r$ is even, the bounds (0.1) are best possible for any volume.
In fact,
\begin{theorem}
	\label{even}
	Let  $d$ and $r$ be integers with $d \geq 6$ and $3 \leq r \leq d-3$, and let $\Pc \subset \RR^d$ be an integral convex polytope of dimension $d$.
	Suppose that $d-r$ is even.
	Then
	$$g_r(\Pc) \geq \cfrac{1}{d!}\left(\textnormal{stirl}(d+1,r+1)+(d!\textnormal{vol}(\Pc)-1)M_{r,d}\right),$$
	and the bound is best possible for any volume.
\end{theorem}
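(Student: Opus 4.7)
The inequality itself is just Henk and Tagami's bound (0.1) specialized to the case $d-r$ even (where $(-1)^{d-r}=1$), so no new argument is required for the inequality. What needs work is sharpness for arbitrary volume. Mirroring the strategy used for Theorem \ref{3}, the plan is to construct, for each admissible integer volume, an integral $d$-polytope whose $\delta$-vector is supported on $\{0,k\}$ for a suitably chosen $k$, and then verify via the formula $d!g_r(\mathcal{P})=\sum_i\delta_i C_{r,d-i}^d$ that the bound is attained exactly.

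The key observation is that with $d-r$ even, Lemma \ref{sym} gives $C_{r,i}^d=C_{r,d-1-i}^d$ for all $0\le i\le d-1$. Hence the finite sequence $(C_{r,1}^d,\ldots,C_{r,d-2}^d)$ is symmetric about the index $(d-1)/2$, and so the minimum $M_{r,d}$ is attained at some $i^*\in\{\lceil(d-1)/2\rceil,\ldots,d-2\}$. Setting $k:=d-i^*$ yields $2\le k\le\lfloor(d+1)/2\rfloor$, which is exactly the range where Lemma \ref{higashitani} can be invoked.

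Given any integer $m\ge 1$, Lemma \ref{higashitani} then furnishes an integral $d$-polytope $\mathcal{P}$ with $\delta_0=1$, $\delta_k=m$, and $\delta_i=0$ for $i\notin\{0,k\}$, so $d!\text{vol}(\mathcal{P})=m+1$. Using that $C_{r,d}^d$ is the coefficient of $z^r$ in $c_d(z)=(z+1)(z+2)\cdots(z+d)$, which under the parity hypothesis equals $\text{stirl}(d+1,r+1)$, a direct computation gives
\[
d!\,g_r(\mathcal{P})\;=\;C_{r,d}^d+m\,C_{r,d-k}^d\;=\;\text{stirl}(d+1,r+1)+(d!\text{vol}(\mathcal{P})-1)\,M_{r,d},
\]
exactly matching the claimed bound. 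The corner case $d!\text{vol}(\mathcal{P})=1$ is covered by the unimodular $d$-simplex, for which both sides trivially reduce to $\text{stirl}(d+1,r+1)/d!$.

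The only real obstacle—and what makes the parity assumption indispensable—is ensuring that the index $k$ determined by a choice of minimizer $i^*$ lies in Higashitani's range $[1,\lfloor(d+1)/2\rfloor]$. Without the extra symmetry supplied by Lemma \ref{sym} in the even case, the minimizer $i^*$ might only live in the lower half $[1,\lfloor(d-1)/2\rfloor]$, forcing $k>\lfloor(d+1)/2\rfloor$ and breaking the construction; this is precisely why the odd case $d-r$ odd will need separate treatment (as in the $r=d-3$ section). Everything else in the argument is bookkeeping.
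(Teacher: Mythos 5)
Your proposal is correct and follows essentially the same route as the paper: use Lemma \ref{sym} (with $d-r$ even) to relocate a minimizer of $C_{r,\cdot}^d$ into the upper half of indices, translate it to an index $k=d-i^*$ in the range $[1,\lfloor(d+1)/2\rfloor]$ admissible for Lemma \ref{higashitani}, and evaluate $d!\,g_r(\Pc)=\sum_i\delta_i C_{r,d-i}^d$ on the resulting polytope (the paper phrases the same step as $j=\max\{i,d-i-1\}$ and puts the mass at $\delta_{d-j}$). The only cosmetic differences are your explicit handling of the volume-$1/d!$ corner case and your remark on why the parity hypothesis is needed, neither of which changes the argument.
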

\begin{proof}
	We let $i$ be an integer with $1 \leq i \leq d-2$ such that  $C_{r,i}^d=M_{r,d}$.
	Since $d-r$ is even, by Lemma \ref{sym}, we have $C_{r,i}^d=C_{r,d-i-1}^d$.
	We set $j:= \max\left\{i,d-i-1\right\}$.
	Then $1 \leq d-j \leq \lfloor(d+1)/2 \rfloor$.
	Hence by Lemma \ref{higashitani}, for any volume, there exists an integral convex polytope $\Pc$ such that $\delta_0=1$ and $\delta_{d-j}=d!\text{vol}(\Pc)-1$ and for each $i \notin \left\{0,d-j\right\}$, $\delta_i=0$.
	%where we let $\delta(\Pc)=(\delta_0,\ldots,\delta_d)$ be the $\delta$-vector of $\Pc.$
	Therefore, since $C_{r,d}^d=\text{stirl}(d+1,r+1)$, we have
	\begin{displaymath}
	\begin{aligned}
	d!g_r(\Pc) %& =\frac{1}{d!}\sum\limits_{i=0}^{d}\delta_iC_{r,d-i}^d\\
	&=\textnormal{stirl}(d+1,r+1)+(d!\textnormal{vol}(\Pc)-1)C^d_{r,j}\\
	&=\textnormal{stirl}(d+1,r+1)+(d!\textnormal{vol}(\Pc)-1)M_{r,d},
	\end{aligned}
	\end{displaymath}
	as desired.
\end{proof}
\section{A new lower bound on $g_{d-3}(\Pc)$}
We assume that $d\geq 7$ and  $r=d-3$.
Then since $d-r$ is odd and $r \geq 4$, it is not known whether the bound (0.1) on $g_{d-3}(\Pc)$ is best possible for any volume.
In this section, we give a new lower bound on $g_{d-3}(\Pc)$.
In particular, we show the bound is best possible, i.e., the bound (0.1) on  $g_{d-3}(\Pc)$ is not best possible.

We set
$$N_{d-3,d}=\min\left\{C^d_{d-3,i}\ : \ \lceil (d-1)/2 \rceil \leq i \leq d-2 \right\}.$$ 
Then $N_{d-3,d} \geq M_{d-3,d}$.

In the following theorem, we give a new lower bound on $g_{d-3}(\Pc)$.

\begin{theorem}
	\label{d-3}
	Let $d$ be an integer with $d \geq 7$ and $\Pc$ an integral convex polytope of dimension $d$.
	Then
	$$g_{d-3}(\Pc) \geq \cfrac{1}{d!}\left(-\textnormal{stirl}(d+1,d-2)+(d!\textnormal{vol}(\Pc)-1)N_{d-3,d}\right),$$
	and the bound is best possible for any volume.
	In particular, $M_{d-3,d}<N_{d-3,d}$.
\end{theorem}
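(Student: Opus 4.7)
My plan is to reduce Theorem \ref{d-3} to a pointwise inequality on the $\delta$-vector of $\mathcal{P}$, prove that inequality by pairing indices via Lemma \ref{sym} and controlling the ``bad'' part of the paired sum with Abel summation against Hibi's inequality (1.2), and then derive sharpness from Lemma \ref{higashitani} and the strict inequality $M_{d-3,d}<N_{d-3,d}$ by an explicit Stirling-number comparison.

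Using $d!\,g_{d-3}(\mathcal{P})=\sum_{i=0}^{d}\delta_{i}C^{d}_{d-3,d-i}$ together with $\delta_{0}=1$, $C^{d}_{d-3,d}=-\mathrm{stirl}(d+1,d-2)$, and $\sum_{i=1}^{d}\delta_{i}=d!\,\mathrm{vol}(\mathcal{P})-1$, the required bound is equivalent to $\sum_{i=1}^{d}\delta_{i}(b_{i}-N)\geq 0$ with $b_{i}:=C^{d}_{d-3,d-i}$ and $N:=N_{d-3,d}<0$. Lemma \ref{sym} gives $b_{d+1-i}=-b_{i}$, so pairing $\{i,d+1-i\}$ produces the contribution $(b_{i}-N)\delta_{i}-(b_{i}+N)\delta_{d+1-i}$, plus a non-negative middle term $-N\delta_{(d+1)/2}$ when $d$ is odd. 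Since $d-i\in[\lceil d/2\rceil,d-1]\subseteq[\lceil(d-1)/2\rceil,d-2]\cup\{d-1\}$, we have $b_{i}\geq N$ for every $i\in[1,\lfloor d/2\rfloor]$; calling $i$ \emph{good} if $b_{i}+N\leq 0$ and \emph{bad} otherwise, the good-pair contributions are automatically non-negative, so only the bad pairs require work.

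The technical core is to control the bad pairs. Expanding $c_{j}(z)=c_{0}(z+j)$ yields the cubic formula
\[
C^{d}_{d-3,j}=\binom{d}{3}j^{3}-\binom{d-1}{2}\binom{d}{2}j^{2}+(d-2)\,\mathrm{stirl}(d,d-2)\,j+\mathrm{stirl}(d,d-3),
\]
and combined with $a_{j}=-a_{d-1-j}$ this cubic structure implies that the bad indices form an initial segment $[1,i_{0}]$ with $i_{0}\leq\lfloor(d+1)/2\rfloor$, on which $b_{i}$ is strictly decreasing. Given this, the partial sums $R_{i}=\delta_{d+1-i}+\cdots+\delta_{d}$ satisfy $R_{i}\leq Q_{i}=\delta_{1}+\cdots+\delta_{i}$ for $1\leq i\leq\lfloor(d+1)/2\rfloor$ by inequality (1.2), and writing $\delta_{d+1-i}=R_{i}-R_{i-1}$ and applying Abel summation gives
\[
\sum_{i=1}^{i_{0}}(b_{i}+N)\delta_{d+1-i}\leq\sum_{i=1}^{i_{0}}(b_{i}+N)\delta_{i},
\]
so the total bad-pair contribution is at least $-2N\sum_{i=1}^{i_{0}}\delta_{i}\geq 0$. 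The main obstacle is the cubic-root analysis behind the structural claim: locating the two real roots of $a(j)=|N|$ on the increasing branch $[j_{\min},d-1]$ of $a$ and verifying that $i_{0}\leq d-j_{\min}\leq\lfloor(d+1)/2\rfloor$ so that the tail range of $j$-values lies entirely in that increasing branch.

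For sharpness, pick $j^{*}\in[\lceil(d-1)/2\rceil,d-2]$ with $C^{d}_{d-3,j^{*}}=N_{d-3,d}$ and set $k=d-j^{*}\in[1,\lfloor(d+1)/2\rfloor]$; Lemma \ref{higashitani} then produces, for any admissible volume, a polytope with $\delta_{0}=1$, $\delta_{k}=d!\,\mathrm{vol}(\mathcal{P})-1$, and all other $\delta_{i}=0$, attaining equality in the bound. The strict inequality $M_{d-3,d}<N_{d-3,d}$ follows from expanding $c_{1}(z)=(z+1)\,z^{\underline{d-1}}$ to get
\[
C^{d}_{d-3,1}=\mathrm{stirl}(d-1,d-4)+\mathrm{stirl}(d-1,d-3)=-\binom{d-1}{2}\binom{d-1}{4}+\tfrac{3d-4}{4}\binom{d-1}{3},
\]
which is strictly negative for $d\geq 7$; a magnitude comparison along the lines of the proof of Lemma \ref{lem3} then shows that $|C^{d}_{d-3,1}|>|C^{d}_{d-3,i}|$ for every $i\in[\lceil(d-1)/2\rceil,d-2]$, whence $M_{d-3,d}\leq C^{d}_{d-3,1}<N_{d-3,d}$.
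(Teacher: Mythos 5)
Your overall architecture is the same as the paper's: expand $d!\,g_{d-3}(\Pc)=\sum_i\delta_iC^d_{d-3,d-i}$, pair indices $i$ and $d+1-i$ via Lemma \ref{sym}, control the problematic pairs by Abel summation against Hibi's inequality (1.2), get sharpness from Lemma \ref{higashitani}, and get $M_{d-3,d}<N_{d-3,d}$ by comparing $C^d_{d-3,1}$ with the $C^d_{d-3,i}$ on the right half of the range. Your reduction to $\sum_{i\ge1}\delta_i(b_i-N)\ge 0$, the good/bad pair split, and the summation-by-parts step (using $R_i\le Q_i$ for $i\le\lfloor(d+1)/2\rfloor$ and monotonicity of $b_i+N$ on the bad segment) are all correct and are essentially the paper's computation in a cleaner packaging.

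However, the technical core is not actually proved, and that is a genuine gap. The structural claim --- that the bad indices form an initial segment on which $b_i$ is (non-)increasing in the right sense --- is precisely the paper's Lemma \ref{kak} (``if $C^d_{d-3,k}\le0$ then $C^d_{d-3,k+1}\ge C^d_{d-3,k}$; if $C^d_{d-3,k}\ge0$ then $C^d_{d-3,k+1}\ge0$''), which the paper establishes by deriving, via Lemma \ref{poly}, the closed form $C^d_{d-3,k}=-\tfrac{1}{48}(d-2)(d-1)d(d-2k-1)(4k^2-4dk+4k+d^2-3d)$ and analyzing its sign and consecutive differences explicitly. You flag this as ``the main obstacle'' and leave it as a plan; moreover, your proposed route (locating the roots of $a(j)=|N|$) is awkwardly circular, since $N=N_{d-3,d}$ is itself defined as a minimum of the same quantities --- what is needed, and what the paper proves, is an unconditional monotonicity/sign statement from which the initial-segment structure of the bad set follows for free. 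Two further points: (a) your claim that $\lvert C^d_{d-3,1}\rvert>\lvert C^d_{d-3,i}\rvert$ for \emph{every} $i\in[\lceil(d-1)/2\rceil,d-2]$ fails at $i=d-2$, where Lemma \ref{sym} gives equality; the correct statement is $C^d_{d-3,1}<C^d_{d-3,i}$, which the paper verifies by showing $C^d_{d-3,1}+C^d_{d-3,k}<0$ through explicit factorizations --- again a computation you assert rather than perform; (b) $N_{d-3,d}<0$ is used (e.g.\ for the middle term and for $b_1\ge N$) but never justified; in the paper it is a consequence of Lemmas \ref{sym} and \ref{kak}. Until the quartic/sign analysis behind Lemma \ref{kak} and the explicit comparison for $C^d_{d-3,1}$ are supplied, the proof is incomplete.
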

In order to prove Theorem \ref{d-3}, we use the following lemma.
\begin{lemma}
	\label{kak}
	For $0 \leq k \leq \lfloor(d+1)/2\rfloor-2$,
	if $C_{d-3,k}^d\leq 0$, then $C_{d-3,k+1}^d \geq C_{d-3,k}^d$,
	and if $C_{d-3,k}^d\geq 0$, then $C_{d-3,k+1}^d \geq 0$.
\end{lemma}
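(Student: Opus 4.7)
The plan is to derive explicit closed-form expressions for both $C_{d-3,k}^d$ and the forward difference $C_{d-3,k+1}^d - C_{d-3,k}^d$, and then settle both conclusions by a direct sign inspection. I will use the centered variable $c := (d-1)/2 - k$, so that the roots of $c_k(z)$, namely $\{j - k : 0 \le j \le d-1\}$, are $d$ consecutive integers symmetric about $c$.

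For the closed form of $C_{d-3,k}^d$, Vieta gives $C_{d-3,k}^d = -e_3(\{j - k : 0 \le j \le d - 1\})$. Writing each root as $c + \beta_j$ with $\beta_j = j - (d-1)/2$, the odd centered power sums $\sum \beta_j$ and $\sum \beta_j^3$ vanish by symmetry, while $\sum \beta_j^2 = d(d^2 - 1)/12$ is standard. Newton's identity $e_3 = (p_1^3 - 3p_1 p_2 + 2p_3)/6$ then collapses to
$$C_{d-3,k}^d \;=\; -\frac{d(d-1)(d-2)}{24}\, c\, \bigl[4c^2 - (d+1)\bigr].$$
For the difference I use the identity
$$c_{k+1}(z) - c_k(z) \;=\; d\,(z+k)(z+k-1)\cdots(z+k-d+2),$$
which follows immediately from $\binom{z+k+1}{d} - \binom{z+k}{d} = \binom{z+k}{d-1}$ together with $c_k(z) = d!\binom{z+k}{d}$. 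The $(d-3)$rd coefficient of the degree-$(d-1)$ polynomial on the right is $e_2$ of the $d-1$ consecutive integers $-k, -k+1, \ldots, d-2-k$; an analogous power-sum computation gives
$$C_{d-3,k+1}^d - C_{d-3,k}^d \;=\; \frac{d(d-1)(d-2)}{2}\left[\left(c - \tfrac{1}{2}\right)^{2} - \frac{d}{12}\right].$$

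With these two formulas in hand the lemma reduces to comparing $c$ with $c^* := \sqrt{d+1}/2$. A quick check shows $c \ge 1$ throughout the range $0 \le k \le \lfloor(d+1)/2\rfloor - 2$ (both for $d$ odd and for $d$ even), and since $d \ge 7$ we also have $c^* > 1$. From the first formula, $C_{d-3,k}^d \ge 0$ is equivalent (on $c \ge 0$) to $c \le c^*$; in that case $c_{k+1} = c - 1$ satisfies $0 \le c_{k+1} < c^*$, and the same formula gives $C_{d-3,k+1}^d \ge 0$. Conversely, $C_{d-3,k}^d \le 0$ forces $c \ge c^*$, so $c - 1/2 \ge (\sqrt{d+1} - 1)/2$; squaring the desired inequality $(\sqrt{d+1} - 1)/2 \ge \sqrt{d/12}$ reduces to $d(d-3) \ge 0$, which is immediate for $d \ge 7$, so $(c - 1/2)^2 \ge d/12$ and the second formula yields $C_{d-3,k+1}^d - C_{d-3,k}^d \ge 0$. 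The only genuine work is the derivation of the two closed forms; everything else is an elementary inequality check, and the centering trick keeps all power-sum computations short and easy to verify on a small example such as $d = 7$.
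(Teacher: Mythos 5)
Your proof is correct: I checked both closed forms against the paper's own expressions (your $-\tfrac{d(d-1)(d-2)}{24}c\,(4c^2-(d+1))$ with $c=(d-1)/2-k$ is exactly the paper's $-\tfrac{(d-2)(d-1)d(d-2k-1)(4k^2-4dk+4k+d^2-3d)}{48}$, and your difference formula likewise matches), the bound $c\ge 1$ on the stated range of $k$ holds in both parities, and the two sign implications follow as you describe. The route, however, is genuinely different from the paper's. The paper factors $c_k=f_kg_k$ into the ``inner'' and ``outer'' blocks of linear factors, reads off $C_{d-3,k}^d=-\sum_{\{i,j,l\}\subset I_k}ijl+(\sum_{i\le k}i^2)(\sum_{i\in I_k}i)$, and then needs a separate auxiliary lemma (Lemma \ref{poly}) on sums of products of shifted consecutive integers, applied in two separate cases according to the parity of $d$ (for $d$ odd an extra boundary root must be peeled off by hand); the forward difference is then obtained by subtracting the two resulting cubics in $k$. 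You instead get the closed form in one stroke from Vieta plus Newton's identity $e_3=(p_1^3-3p_1p_2+2p_3)/6$ applied to the $d$ consecutive roots centered at $c$, which kills the odd centered power sums by symmetry and makes the parity split unnecessary, and you obtain the difference structurally from Pascal's rule, $c_{k+1}(z)-c_k(z)=d\,(z+k)\cdots(z+k-d+2)$, reducing it to an $e_2$ computation on $d-1$ consecutive integers rather than to an algebraic subtraction. What your approach buys is a uniform, shorter derivation with an identity (the Pascal-rule difference) that explains \emph{why} the difference has such a clean quadratic form; what the paper's approach buys is that the intermediate formula $C_{d-3,k}^d=-\sum ijl+(\sum i^2)(\sum i)$ is the same template it already set up for $C_{3,k}^d$ in Lemma \ref{lem3}, so the two lemmas share machinery. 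The concluding sign analysis (comparing $c$ with $\sqrt{d+1}/2$ and checking $d(d-3)\ge 0$) is equivalent in both treatments.
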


Before proving Lemma \ref{kak}, we show the following lemma.
\begin{lemma}
	\label{poly}
	Let $x,y \in \ZZ$ and $y>0$.
	Then
	$$\sum\limits_{-y\leq a<b \leq y}(x+a)(x+b)=\binom{2y+1}{2}x^2-\cfrac{1}{4}\binom{2y+2}{3},$$
	and
	$$\sum\limits_{-y\leq a<b<c \leq y}(x+a)(x+b)(x+c)=\binom{2y+1}{3}x^3-\binom{2y+2}{4}x.$$
	
\end{lemma}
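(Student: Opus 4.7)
My plan is to expand both products, use the symmetry of the index set $\{-y,-y+1,\ldots,y\}$ under negation to kill the odd-degree-in-$(a,b,c)$ terms, and then reduce everything to one power sum and a counting identity.

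\medskip

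\textbf{Quadratic identity.} Expanding $(x+a)(x+b)=x^{2}+(a+b)x+ab$ and summing gives
$$\sum_{-y\le a<b\le y}(x+a)(x+b)=\binom{2y+1}{2}x^{2}+x\!\!\sum_{-y\le a<b\le y}\!\!(a+b)+\!\!\sum_{-y\le a<b\le y}\!\!ab.$$
The bijection $(a,b)\mapsto(-b,-a)$ on pairs with $a<b$ negates $a+b$, so the middle sum is $0$. For the last sum I would use $2\sum_{a<b}ab=(\sum_{a}a)^{2}-\sum_{a}a^{2}$; since $\sum_{a=-y}^{y}a=0$ and $\sum_{a=-y}^{y}a^{2}=\frac{y(y+1)(2y+1)}{3}$, this yields $\sum_{a<b}ab=-\frac{y(y+1)(2y+1)}{6}$. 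Finally, a short rewrite of $\binom{2y+2}{3}=\frac{(2y+2)(2y+1)(2y)}{6}=\frac{2y(y+1)(2y+1)}{3}$ shows this equals $-\frac{1}{4}\binom{2y+2}{3}$, as required.

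\medskip

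\textbf{Cubic identity.} Expanding $(x+a)(x+b)(x+c)=x^{3}+(a+b+c)x^{2}+(ab+ac+bc)x+abc$ and summing gives
$$\sum_{a<b<c}(x+a)(x+b)(x+c)=\binom{2y+1}{3}x^{3}+x^{2}\!\sum_{a<b<c}(a{+}b{+}c)+x\!\sum_{a<b<c}(ab{+}ac{+}bc)+\!\sum_{a<b<c}abc.$$
The bijection $(a,b,c)\mapsto(-c,-b,-a)$ on strictly increasing triples negates both $a+b+c$ and $abc$, so the two corresponding sums vanish. The key remaining step is the counting identity
$$\sum_{-y\le a<b<c\le y}(ab+ac+bc)=(2y-1)\sum_{-y\le a<b\le y}ab,$$
obtained by noting that a fixed pair $\{p,q\}$ contributes its product $pq$ to exactly $(2y+1)-2=2y-1$ triples (one for each admissible third element). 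Plugging in the value of $\sum_{a<b}ab$ computed above gives $-\frac{(2y-1)y(y+1)(2y+1)}{6}$, and the factorization $\binom{2y+2}{4}=\frac{(2y+2)(2y+1)(2y)(2y-1)}{24}=\frac{y(y+1)(2y+1)(2y-1)}{6}$ matches this up to the required sign.

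\medskip

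\textbf{Anticipated obstacle.} There is no real obstacle beyond bookkeeping: the two places one has to be careful are (i) the counting identity for $\sum_{a<b<c}(ab+ac+bc)$, which must be stated as ``each unordered pair lies in $2y-1$ triples'' rather than derived by clumsy triple-sum manipulation, and (ii) the conversion of the closed-form answers into the stated binomial coefficients $\frac{1}{4}\binom{2y+2}{3}$ and $\binom{2y+2}{4}$, which requires pulling a factor of $2$ out of $(2y+2)$ at the right moment. Once those two pieces are in place the proof is a short computation.
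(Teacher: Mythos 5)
Your proof is correct, but it takes a different route from the paper: the paper disposes of this lemma in one line, ``by using induction on $y$, it immediately follows,'' with no details given. Your argument is a direct derivation -- expand the products, use the involution $(a,b,c)\mapsto(-c,-b,-a)$ to kill the odd-symmetric elementary symmetric sums, reduce the surviving coefficients to $\sum a^2$ via Newton-type identities and the pair-in-triple count $2y-1$, and match the resulting closed forms against the binomial coefficients. All of these steps check out: $\sum_{a<b}ab=-\tfrac{y(y+1)(2y+1)}{6}=-\tfrac14\binom{2y+2}{3}$ and $(2y-1)\sum_{a<b}ab=-\binom{2y+2}{4}$ are both correct. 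What your approach buys is that it actually produces the formulas rather than merely verifying them: the paper's induction presupposes the closed forms and would still require computing the increment when passing from $y$ to $y+1$ (the new terms are those involving $a=-(y+1)$ or $c=y+1$), which is not obviously less work than your direct computation. Your version is self-contained and arguably more transparent; the paper's is shorter on the page only because it omits the verification entirely.
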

\begin{proof}
	By using induction on $y$, it immediately follows.
	%By using induction on $y$, we show the conclusion.
	%When $y=1$, it is clear.
	%We assume that $y \geq 2$.
	%Set $I=\left\{i \in \ZZ \ : \ -y+1 \leq i \leq y-1\right\}$.
	%Then
	%\begin{displaymath}
	%\begin{aligned}
	%&\sum\limits_{\left\{a,b\right\} \subset I \cup \left\{-y,y\right\}}(x+a)(x+b)\\
	%=&\sum\limits_{\left\{a,b\right\} \subset I}(x+a)(x+b)+2x\sum\limits_{a \in I}(x+a)+(x+y)(x-y)\\
	%=&\binom{2y-1}{2}x^2-\cfrac{1}{4}\binom{2y}{3}+2x^2(2y-1)+(x+y)(x-y)\\
	%=&\binom{2y+1}{2}x^2-\cfrac{1}{4}\binom{2y+2}{3},
	%\end{aligned}
	%\end{displaymath}
	%and
	%\begin{displaymath}
	%\begin{aligned}
	%&\sum\limits_{\left\{a,b,c\right\} \subset I \cup \left\{-y,y\right\}}(x+a)(x+b)(x+c)\\
	%=&\sum\limits_{\left\{a,b,c\right\} \subset I}(x+a)(x+b)(x+c)+2x\sum\limits_{\left\{a,b\right\} \subset I}(x+a)(x+b)+(x+y)(x-y)\sum\limits_{a \in I}(x+a)\\
	%=&\binom{2y-1}{3}x^3-\binom{2y}{4}x+2x \left(\binom{2y-1}{2}x^2-\cfrac{1}{4}\binom{2y}{3}\right)+x(x+y)(x-y)(2y-1)\\
	%=&\binom{2y+1}{3}x^3-\binom{2y+2}{4}x,
	%\end{aligned}
	%\end{displaymath}
	%as desired.
\end{proof}
Now, we prove Lemma \ref{kak}.
\begin{proof}[Proof of Lemma \ref{kak}]
	We let $f_k(z)$, $g_k(z)$ and  $I_k$ be as in the proof of Lemma \ref{lem3}.
	%we set 
	%$I_k=\left\{i \in \ZZ \ : \  k+1\leq i \leq d-k-1 \right\}.$
	 
	If $1 \leq k \leq \left\lfloor (d+1)/2 \right\rfloor -3$, then
	\begin{displaymath}
	\begin{aligned}
	f_k(z)=&z^{2k+1}-\sum\limits_{1 \leq i \leq k}i^2z^{2k-1}+\text{lower terms},\\
	g_k(z)=&z^{d-2k-1}-\sum\limits_{i\in I_k}iz^{d-2k-2}+\sum\limits_{\left\{i,j\right\}\subset I_k}ijz^{d-2k-3}-\sum\limits_{\left\{i,j,l\right\}\subset I_k}ijlz^{d-2k-4}+\text{lower terms}.
	\end{aligned} 
	\end{displaymath}
	Hence for $1 \leq k \leq \left\lfloor (d+1)/2 \right\rfloor -1$,
	$$C_{d-3,k}^d=-\sum\limits_{\left\{i,j,l\right\}\subset I_k}ijl+\left(\sum\limits_{1 \leq i \leq k}i^2\right)\cdot\left(\sum\limits_{i\in I_k}i\right).$$
	%Also, if $k=\left\lfloor (d+1)/2 \right\rfloor -2$, we have
	%\begin{displaymath}
	%C_{d-3,k}^d=\left\{
	%\begin{aligned}
	%&-\sum\limits_{\left\{i,j,l\right\}\subset I_k}ijl+\left(\sum\limits_{1 \leq i \leq k}i^2\right)\cdot\left(\sum\limits_{i\in I_k}i\right) &(d:\text{even}),\\
	%&\left(\sum\limits_{1 \leq i \leq k}i^2\right)\cdot\left(\sum\limits_{i\in I_k}i\right) &(d:\text{odd}),\\
	%\end{aligned} 
	%\right.
	%\end{displaymath}
	%and if $k=\left\lfloor (d+1)/2 \right\rfloor -1$, we have
	%\begin{displaymath}
	%C_{d-3,k}^d=\left\{
	%\begin{aligned}
	%&(k+1)\sum\limits_{1 \leq i \leq k}i^2 &(d:\text{even}),\\
	%& 0 &(d:\text{odd}).\\
	%\end{aligned} 
	%\right.
	%\end{displaymath}
  Since $C_{d-3,0}^d=-\sum_{\left\{i,j,l\right\}\subset I_0}ijl<0$
  and  $I_1 \subset I_0$,	%$C_{d-3,1}^d-C_{d-3,0}^d=-\sum\limits_{\left\{i,j,l\right\}\subset I_1}ijl+\sum\limits_{i\in I_1}i+\sum\limits_{\left\{i,j,l\right\}\subset I_0}ijl>0,$
	we have $C_{d-3,1}^d-C_{d-3,0}^d>0$.
	Hence we should show the cases $1 \leq k \leq \lfloor(d+1)/2\rfloor-2$.
	
	First, we show the cases $d$ is even.
	If $k=\left\lfloor (d+1)/2 \right\rfloor -1$, then $C_{d-3,k}^d>0$.
	Hence we should show the cases $1 \leq k \leq \lfloor(d+1)/2\rfloor-3$.
	We set $x=d/2$ and for $1 \leq k \leq \lfloor(d+1)/2\rfloor-2$, 
	we set $J_k=\left\{i \in \ZZ \ : \  -x+k+1\leq i \leq x-k-1 \right\}.$
	For $1 \leq k \leq \lfloor(d+1)/2\rfloor-2$, by Lemma \ref{poly}, we have
		\begin{displaymath}
		\begin{aligned}
			C_{d-3,k}^d %&-\sum\limits_{k+1 \leq i <j<l\leq d-k-1}ijl+\sum\limits_{1 \leq i \leq k}i^2\sum\limits_{k+1 \leq i \leq d-k-1}i\\
			=&-\sum\limits_{\left\{i,j,l\right\}\subset J_k}\left(x+i\right)\left(x+j\right)\left(x+l\right)+\left(\sum\limits_{1 \leq i \leq k}i^2\right)\cdot\left(\sum\limits_{i\in I_k}i\right)\\
			=&-\left(\binom{d-2k-1}{3}x^3-\binom{d-2k}{4}x\right)+\cfrac{k(k+1)(2k+1)}{6}\cfrac{d(d-2k-1)}{2}\\
			=&-\cfrac{(d-2)(d-1)d(d-2k-1)(4k^2-4dk+4k+d^2-3d)}{48}.
		\end{aligned} 
		\end{displaymath}
		If $-\frac{\sqrt{d+1}-d+1}{2} \leq k \leq \frac{\sqrt{d+1}+d-1}{2}$, then $4k^2-4dk+4k+d^2-3d \leq 0$ and hence $C_{d-3,k}^d\geq0$, and if $1 \leq k \leq -\frac{\sqrt{d+1}-d+1}{2} $, then $C_{d-3,k}^d\leq0$.
		Therefore, if $C_{d-3,k}^d\geq0$, we have $C_{d-3,k+1}^d\geq0$.
		If $1 \leq k \leq -\frac{\sqrt{d+1}-d+1}{2}$ and $1 \leq k \leq \lfloor(d+1)/2\rfloor-3$, then $C_{d-3,k}^d\leq0$ and
		\begin{displaymath}
		\begin{aligned}
		C_{d-3,k+1}^d-C_{d-3,k}^d=\cfrac{(d-2)(d-1)d(12k^2-12dk+24k+3d^2-13d+12)}{24}.
		\end{aligned} 
		\end{displaymath}
		Since $1 \leq k \leq -\frac{\sqrt{d+1}-d+1}{2} \leq \frac{3d-6-\sqrt{3d}}{6}$, we have $C_{d-3,k+1}^d-C_{d-3,k}^d \geq 0$.
		
	Next, we show the cases $d$ is odd.
	If $k= (d+1)/2 -1$ or $k=(d+1)/2 -2$, then $C_{d-3,k}^d\geq0$.
	Hence we should show the cases $1 \leq k \leq (d+1)/2-4$.
	We set $y=(d+1)/2$ and for $1 \leq k \leq (d+1)/2-3$,
	we set $L_k=\left\{i \in \ZZ \ : \  -y+k+2\leq i \leq y-k-2 \right\}.$
	For $1 \leq k \leq (d+1)/2-3$, by Lemma \ref{poly}, we have
	\begin{displaymath}
	\begin{aligned}
C_{d-3,k}^d
 %=&-\sum\limits_{k+1 \leq i <j<l\leq d-k-1}ijl+\sum\limits_{1 \leq i \leq k}i^2\sum\limits_{k+1 \leq i \leq d-k-1}i\\
	=&-\sum\limits_{\left\{i,j,l\right\}\subset L_k \cup \left\{-y+k+1\right\}}\left(y+i\right)\left(y+j\right)\left(y+l\right)+\left(\sum\limits_{1 \leq i \leq k}i^2\right)\cdot\left(\sum\limits_{i\in I_k}i\right)\\
    =&-(k+1)\sum\limits_{\left\{i,j\right\}\subset L_k}\left(y+i\right)\left(y+j\right)-\sum\limits_{\left\{i,j,l\right\}\subset L_k}\left(y+i\right)\left(y+j\right)\left(y+l\right)\\
    &+\left(\sum\limits_{1 \leq i \leq k}i^2\right)\cdot\left(\sum\limits_{i\in I_k}i\right)\\
	%=&-(k+1)\left(\binom{d-2k-2}{3}y^2-\frac{1}{6}\left(y-k-2\right)\left(y-k-1\right)(d-2k-2)\right)\\
	%&-\left(\binom{d-2k-2}{3}y^3-\binom{d-2k-1}{4}y\right)+\frac{k(k+1)(2k+1)}{6}\frac{d(d-2k-1)}{2}\\
	=&-\cfrac{(d-2)(d-1)d(d-2k-1)(4k^2-4dk+4k+d^2-3d)}{48}.
	\end{aligned} 
	\end{displaymath}
    Thus it follows by the same argument, as desired.
\end{proof}
Finally, we prove Theorem \ref{d-3}.
\begin{proof}[Proof of Theorem \ref{d-3}]
	Let $\delta(\Pc)=(\delta_0,\ldots,\delta_d)$ be the $\delta$-vector of $\Pc$.
	We assume that $d$ is even.
	Then by Lemma \ref{sym}, we have
\begin{displaymath}
\begin{aligned}
d!g_{d-3}(\Pc) %=&\sum\limits_{i=0}^{d}\delta_iC_{d-3,d-i}^d\\
					 %&=C_{d-3,d}^d+\sum\limits_{i=1}^{\frac{d}{2}}\delta_iC_{d-3,d-i}^d+\sum\limits_{i=\frac{d}{2}+1}^{d}\delta_iC_{d-3,d-i}^d\\
					 %&=C_{d-3,d}^d+\sum\limits_{i=1}^{\frac{d}{2}}\delta_iC_{d-3,d-i}^d-\sum\limits_{i=\frac{d}{2}+1}^{d}\delta_iC_{d-3,i-1}^d\\
					  =&C_{d-3,d}^d+\sum\limits_{i=1}^{\frac{d}{2}}(\delta_i-\delta_{d-i+1})C_{d-3,d-i}^d\\
					  =&C_{d-3,d}^d+\sum\limits_{i=1}^{\frac{d}{2}-1}\left(\sum\limits_{j=1}^{i}(\delta_j-\delta_{d-j+1})(C_{d-3,d-i}^d-C_{d-3,d-i-1}^d)\right)\\
					  &+\sum\limits_{j=1}^{\frac{d}{2}}(\delta_j-\delta_{d-j+1})C_{d-3,\frac{d}{2}}^d.\\
\end{aligned} 
\end{displaymath}
By Lemmas \ref{sym} and \ref{kak}, we have $N_{d-3,d}<0$ and there exists an integer $t$ such that for $d/2 \leq i \leq t$, $C_{d-3,i}^d \leq 0$, and for $t+1 \leq i \leq d-1$, $C_{d-3,i}^d-C_{d-3,i-1}^d \geq 0$.
Hence by the inequality (1.2), we have
\begin{displaymath}
\begin{aligned}
d!g_{d-3}(\Pc)
\geq& C_{d-3,d}^d+\sum\limits_{i=d-t}^{\frac{d}{2}-1}\left(\sum\limits_{j=1}^{i}(\delta_j-\delta_{d-j+1})(C_{d-3,d-i}^d-C_{d-3,d-i-1}^d)\right)\\
&+\sum\limits_{j=1}^{\frac{d}{2}}(\delta_j-\delta_{d-j+1})C_{d-3,\frac{d}{2}}^d\\
=&C_{d-3,d}^d+\sum\limits_{j=1}^{d-t}(\delta_j-\delta_{d-j+1})C_{d-3,t}^d+\sum\limits_{i=d-t+1}^{\frac{d}{2}}(\delta_i-\delta_{d-i+1})C_{d-3,d-i}^d.\\
\end{aligned} 
\end{displaymath}
If for $d-t+1 \leq i \leq d/2$, $\delta_i-\delta_{d-i+1} \leq 0$, then since $C_{d-3,d-i}^d \leq 0$, we have $(\delta_i-\delta_{d-i+1})C_{d-3,d-i}^d \geq 0$.
Also, if for $d-t+1 \leq i \leq d/2$, $\delta_i-\delta_{d-i+1} \geq 0$, then $(\delta_i-\delta_{d-i+1})C_{d-3,d-i}^d \geq (\delta_i-\delta_{d-i+1})N_{d-3,d}$.
Hence since $C_{d-3,d}^d=-\text{stirl}(d+1,d-2)$ and $N_{d-3,d}<0$, by the inequality (1.2), we have
\begin{displaymath}
\begin{aligned}
d!g_{d-3}(\Pc) & \geq C_{d-3,d}^d+\sum\limits_{j=1}^{d-t}(\delta_j-\delta_{d-j+1})N_{d-3,d}+\sum\limits_{i \in I'}(\delta_i-\delta_{d-i+1})N_{d-3,d}\\
& \geq -\text{stirl}(d+1,d-2)+\sum\limits_{i=1}^{d}\delta_iN_{d-3,d}\\
& = -\text{stirl}(d+1,d-2)+(d!\text{vol}(\Pc)-1)N_{d-3,d},
\end{aligned} 
\end{displaymath}
where $I'=\{i \in \ZZ \  : \  d-t+1 \leq i \leq d/2, \delta_i-\delta_{d-i+1} \geq 0 \}$.

Next, we assume that $d$ is odd.
Then by Lemma \ref{sym}, we have
\begin{displaymath}
\begin{aligned}
d!g_{d-3}(\Pc)=&\sum\limits_{i=0}^{d}\delta_iC_{d-3,d-i}^d\\
%&=C_{d-3,d}^d+\sum\limits_{i=1}^{\frac{d+1}{2}-1}\delta_iC_{d-3,d-i}^d+\delta_{\frac{d+1}{2}}C_{d-3,\frac{d-1}{2}}^d+\sum\limits_{i=\frac{d+1}{2}+1}^{d}\delta_iC_{d-3,d-i}^d\\
%&=C_{d-3,d}^d+\sum\limits_{i=1}^{\frac{d+1}{2}-1}\delta_iC_{d-3,d-i}^d+\delta_{\frac{d+1}{2}}C_{d-3,\frac{d-1}{2}}^d-\sum\limits_{i=\frac{d+1}{2}+1}^{d}\delta_iC_{d-3,i+1}^d\\
%&=C_{d-3,d}^d+\sum\limits_{i=1}^{\frac{d+1}{2}-1}(\delta_i-\delta_{d-i+1})C_{d-3,d-i}^d+\delta_{\frac{d+1}{2}}C_{d-3,\frac{d-1}{2}}^d\\
=&C_{d-3,d}^d+\sum\limits_{i=1}^{\frac{d+1}{2}-2}\left(\sum\limits_{j=1}^{i}(\delta_j-\delta_{d-j+1})(C_{d-3,d-i}^d-C_{d-3,d-i-1}^d)\right)\\
&+\sum\limits_{j=1}^{\frac{d+1}{2}-1}(\delta_j-\delta_{d-j+1})C_{d-3,\frac{d+1}{2}}^d+\delta_{\frac{d+1}{2}}C_{d-3,\frac{d-1}{2}}^d.\\
\end{aligned} 
\end{displaymath}
Hence it follows by the same argument.

By Lemma \ref{higashitani}, it follows that this bound is best possible.

Finally, we show that $M_{d-3,d}<N_{d-3,d}$.
In particular, we show that $C_{d-3,1}^d<N_{d-3,d}$.
For $2 \leq k \leq \lfloor (d+1)/2 \rfloor-3$,
\begin{displaymath}
\begin{aligned}
C_{d-3,1}^d+C_{d-3,k}^d&=-\cfrac{(d-2)(d-1)d(d-k-2)(4k^2-2dk-2k+d^2-5d+6)}{24}<0.
\end{aligned}
\end{displaymath}
Also, if $d$ is odd and $k=(d+1)/2-2$, then $C_{d-3,1}^d+C_{d-3,k}^d<0$,
and if $d$ is odd and $k=(d+1)/2-1$, then 
 \begin{displaymath}
 \begin{aligned}
 C_{d-3,1}^d+C_{d-3,k}^d&=-\cfrac{(d-3)(d-2)(d-1)d(d^2-7d+8)}{48}<0,
 \end{aligned}
 \end{displaymath}
and if $d$ is odd even $k=d/2-2$, then
 \begin{displaymath}
 \begin{aligned}
 C_{d-3,1}^d+C_{d-3,k}^d&=-\cfrac{(d-2)d^2(d-1)(d^2-10d+26)}{48}<0.
 \end{aligned}
 \end{displaymath}
Hence by Lemmas \ref{sym} and \ref{kak}, we have $C_{d-3,1}^d<N_{d-3,d}$, as desired.
\end{proof}

For $d\geq 3$ we set  
$$N_{r,d}=\min\left\{C^d_{r,i}\ : \ \lceil (d-1)/2 \rceil \leq i \leq d-2 \right\}.$$ 
We recall the following lemma.

\begin{lemma}[{\cite[Proposition 2.1]{lower}}]
	Let $d \geq 3$.
	Then\\
	\textnormal{(i)} $M_{1,d}=C_{1,d-2}^d$,\\
	\textnormal{(ii)} $M_{2,d}=C_{2,d-2}^d$,\\
	\textnormal{(iii)} $M_{d-2,d}=C_{d-2,\lceil\frac{d-1}{2}\rceil}^d$.
\end{lemma}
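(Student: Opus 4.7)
The plan is to produce closed forms for $C^d_{1,i}$, $C^d_{2,i}$, and $C^d_{d-2,i}$, and then minimize each over $1 \le i \le d-2$. The key observation for the first two parts is that, for $0 \le i \le d-1$, the factor $z$ appears in $c_i(z)$, giving the factorization
$$c_i(z) = z \cdot \prod_{j=1}^{i}(z+j) \cdot \prod_{j=1}^{d-1-i}(z-j).$$
Extracting the constant term and the linear term (via a logarithmic-derivative computation) from the product of the two factorials yields
$$C^d_{1,i} = (-1)^{d-1-i}\, i!\, (d-1-i)!, \qquad C^d_{2,i} = (-1)^{d-1-i}\, i!\, (d-1-i)!\,(H_i - H_{d-1-i}),$$
where $H_n = \sum_{j=1}^{n} 1/j$ and $H_0 = 0$.

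For part (i), the quantity $i!(d-1-i)! = (d-1)!/\binom{d-1}{i}$ attains its maximum $(d-2)!$ on $\{1,\ldots,d-2\}$ precisely at $i \in \{1, d-2\}$ and is strictly smaller elsewhere. The sign at $i=d-2$ is $-1$ for every $d \geq 3$, while at $i=1$ the sign is $(-1)^{d-2}$, so the most negative value is $-(d-2)! = C^d_{1,d-2}$, proving (i).

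For part (ii), combining $i!(d-1-i)! \le (d-2)!$ with $|H_i - H_{d-1-i}| \le H_{d-2} - 1$ (both inequalities being strict unless $\{i,d-1-i\} = \{1,d-2\}$) shows that $|C^d_{2,i}| < |C^d_{2,d-2}| = (d-2)!(H_{d-2}-1)$ for every $2 \le i \le d-3$. Since $C^d_{2,d-2}$ is negative while $C^d_{2,1}$ equals $\pm\, C^d_{2,d-2}$ by Lemma \ref{sym}, the minimum is attained at $i=d-2$, giving (ii). The delicate step is the verification of strict inequality in both magnitude factors for $2 \le i \le d-3$; this reduces to a routine monotonicity check on $\binom{d-1}{i}$ and on the piecewise-monotone function $i \mapsto |H_i - H_{d-1-i}|$.

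For part (iii), I would switch viewpoint and treat $C^d_{d-2,i}$ as the second elementary symmetric function of the shifted integer set $\{k-i : 0 \le k \le d-1\}$. Using $e_2 = \tfrac{1}{2}(S_1^2 - S_2)$ together with direct evaluation of $S_1 = d\bigl(\tfrac{d-1}{2}-i\bigr)$ and $S_2 = d\bigl(i-\tfrac{d-1}{2}\bigr)^2 + \tfrac{d(d^2-1)}{12}$, one computes
$$C^d_{d-2,i} = \frac{d(d-1)}{2}\left[\left(i - \tfrac{d-1}{2}\right)^2 - \frac{d+1}{12}\right].$$
This is a convex parabola in $i$, minimized at the integer closest to $(d-1)/2$, namely $\lceil (d-1)/2 \rceil$ (and also $\lfloor (d-1)/2 \rfloor$ when $d$ is even, which gives the same value), yielding (iii). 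I expect the main obstacle to be the case analysis in (ii); (i) and (iii) then follow at once from the closed forms.
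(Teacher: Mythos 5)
Your argument is correct, but there is nothing in this paper to compare it against: the statement is quoted verbatim from Henk and Tagami (\cite[Proposition 2.1]{lower}) and the present paper gives no proof of it. Taken on its own terms, your derivation checks out. The factorization $c_i(z)=z\prod_{j=1}^{i}(z+j)\prod_{j=1}^{d-1-i}(z-j)$ is valid for $0\le i\le d-1$ and yields exactly the closed forms you state, namely $C^d_{1,i}=(-1)^{d-1-i}i!(d-1-i)!$ and $C^d_{2,i}=(-1)^{d-1-i}i!(d-1-i)!(H_i-H_{d-1-i})$; the endpoint analysis in (i) and the two-factor magnitude estimate in (ii) are sound (with the harmless degenerate cases $d=3,4$, where the index range collapses, handled trivially). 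For (iii), your identity $C^d_{d-2,i}=\tfrac{d(d-1)}{2}\bigl[(i-\tfrac{d-1}{2})^2-\tfrac{d+1}{12}\bigr]$ is correct (I verified $S_1$ and $S_2$ and the case $d=3$), and the convexity argument picks out $\lceil(d-1)/2\rceil$ as claimed, with the tie at $\lfloor(d-1)/2\rfloor$ for $d$ even giving the same value. Your route is in the same computational spirit as the source the paper cites (explicit low-order and high-order coefficients of $c_i$ via symmetric functions), so it is best viewed as a self-contained reconstruction of the omitted external proof rather than a genuinely new approach; its value here is that it makes the paper's reliance on \cite{lower} independent. The only place where you wave your hands is the ``routine monotonicity check'' in (ii); it does go through, since $i!(d-1-i)!=(d-1)!/\binom{d-1}{i}$ is strictly below $(d-2)!$ for $2\le i\le d-3$ and $|H_i-H_{d-1-i}|\le H_{d-2}-1$ on the whole range, but you should write out at least the first of these inequalities explicitly if this is to stand as a complete proof.
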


If $r \in \left\{1,2,3,d-3,d-2\right\}$ or $d-r$ is even, then by this lemma and by the results so far, we have $M_{r,d}=N_{r,d}$.
Hence we immediately have the following corollary.
\begin{corollary}
	Let $\Pc \subset \RR^d, d \geq 3$. 
	Assume that $r \in \left\{1,2,3,d-3,d-2\right\}$ or $d-r$ is even.
	Then we have
	$$g_{r}(\Pc) \geq \cfrac{1}{d!}\left((-1)^{d-r}\textnormal{stirl}(d+1,r+1)+(d!\textnormal{vol}(\Pc)-1)N_{r,d}\right),$$
	and the bound is best possible for any volume.
\end{corollary}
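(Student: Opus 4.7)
The plan is to reduce the corollary to the earlier results already established in the paper by verifying that, in each of the listed cases, the quantity $N_{r,d}$ in the displayed bound can be replaced by $M_{r,d}$ (except when $r=d-3$, which is handled directly by Theorem \ref{d-3}). Once this replacement is justified, the bound becomes exactly Henk and Tagami's bound (0.1), whose sharpness in each remaining case is already known.

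First I would observe that $N_{r,d}$ is the minimum of $C^d_{r,i}$ over a subinterval of $\{1,\ldots,d-2\}$, so $N_{r,d} \geq M_{r,d}$ always. Hence to get the equality $N_{r,d}=M_{r,d}$ it suffices to exhibit some index $i_0 \in \{\lceil (d-1)/2 \rceil,\ldots,d-2\}$ at which $C^d_{r,i_0}=M_{r,d}$. For $r\in\{1,2\}$ the recalled lemma gives the minimizer $i_0=d-2$, which clearly lies in the defining range of $N_{r,d}$; for $r=d-2$ the minimizer is $i_0=\lceil (d-1)/2\rceil$, the left endpoint of that range. For $r=3$ (requiring $d\geq 6$), Lemma \ref{lem3} places the minimizer at either $d-3$ or $d-2$, both lying in the defining range. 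Finally, when $d-r$ is even, Lemma \ref{sym} gives the palindromic symmetry $C^d_{r,i}=C^d_{r,d-1-i}$, so if $M_{r,d}$ is attained at some $i<\lceil (d-1)/2\rceil$, it is also attained at $d-1-i \geq \lceil (d-1)/2\rceil$, which lies in the range for $N_{r,d}$.

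With $M_{r,d}=N_{r,d}$ established in the cases $r\in\{1,2,3,d-2\}$ or $d-r$ even, the inequality in the corollary coincides with (0.1). Sharpness for $r\in\{1,2,d-2\}$ is recorded in the introduction, while for $r=3$ and for $d-r$ even it is supplied by Theorems \ref{3} and \ref{even}. The remaining case $r=d-3$ is not covered by this reduction (indeed Theorem \ref{d-3} shows $M_{d-3,d}<N_{d-3,d}$), but Theorem \ref{d-3} itself provides both the bound with $N_{d-3,d}$ and its sharpness. Assembling these pieces yields the corollary.

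The main task is essentially bookkeeping: one just needs to check case by case that the index at which $M_{r,d}$ is achieved already lies in $[\lceil (d-1)/2\rceil, d-2]$, with the only slightly substantive ingredient being the symmetry invoked in the $d-r$ even case via Lemma \ref{sym}. There is no genuine analytical obstacle; the whole content of the corollary is the observation that the strengthening from $M$ to $N$ is free (or already known) outside of $r=d-3$, and that $r=d-3$ has been resolved in Theorem \ref{d-3}.
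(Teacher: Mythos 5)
Your proposal is correct and follows essentially the same route as the paper, which derives the corollary by observing that $M_{r,d}=N_{r,d}$ in the cases reducible to the earlier theorems and invoking Theorem \ref{d-3} for $r=d-3$; you simply make explicit the index-location and symmetry checks that the paper leaves implicit. If anything, your handling of $r=d-3$ is more careful than the paper's wording, which sweeps that case into the blanket claim $M_{r,d}=N_{r,d}$ even though Theorem \ref{d-3} shows $M_{d-3,d}<N_{d-3,d}$; routing that case directly through Theorem \ref{d-3}, as you do, is the right fix.
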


Then the following conjecture naturally occurs:

\begin{conjecture}
	\label{conj}
	Let $\Pc \subset \RR^d, d \geq 3 $. 
	Then for $r=1,\ldots,d-2$ 
	$$g_{r}(\Pc) \geq \cfrac{1}{d!}\left((-1)^{d-r}\textnormal{stirl}(d+1,r+1)+(d!\textnormal{vol}(\Pc)-1)N_{r,d}\right),$$
	and the bounds are best possible for any volume.	
\end{conjecture}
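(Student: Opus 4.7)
The plan is to extend the strategy of Theorem \ref{d-3} beyond $r=d-3$ to all remaining open cases, which by the preceding corollary amount to $4 \leq r \leq d-4$ with $d-r$ odd. For any such $r$, I would start from the identity
\[ d!g_r(\Pc) = \sum_{i=0}^{d}\delta_i C^d_{r,d-i} \]
and exploit the antisymmetry $C^d_{r,d-1-i} = -C^d_{r,i}$ (Lemma \ref{sym}, since $d-r$ is odd) to rewrite the sum in terms of the differences $\delta_i - \delta_{d-i+1}$ that are controlled by Hibi's inequality (1.2). Following the telescoping in the proof of Theorem \ref{d-3}, I would re-sum by parts so that these differences appear with coefficients equal to successive differences $C^d_{r,k+1} - C^d_{r,k}$, thereby reducing the problem to analyzing the sign pattern of the finite sequences $\{C^d_{r,k}\}$ and $\{C^d_{r,k+1} - C^d_{r,k}\}$ for $k$ in the range $\lceil (d-1)/2 \rceil \leq k \leq d-2$.

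The key combinatorial step is to prove an analogue of Lemma \ref{kak} for general $r$: I expect that there is a threshold $t$ inside this range such that $C^d_{r,k} \leq 0$ for $k \leq t$ and $C^d_{r,k+1} - C^d_{r,k} \geq 0$ for $k > t$, with a bounded number of additional sign changes controlled by $d-r$. Assuming such a structural lemma, Hibi's inequality delivers term-by-term control in the direction needed, and every negative contribution collapses into a single multiple of $(d!\textrm{vol}(\Pc) - 1)\, N_{r,d}$, yielding the claimed lower bound. For the best-possibility half, I would apply Lemma \ref{higashitani} to the index $i^{\ast}$ achieving the minimum in the definition of $N_{r,d}$; since $i^\ast \geq \lceil(d-1)/2\rceil$ forces $d-i^\ast \leq \lfloor(d+1)/2\rfloor$, the admissibility hypothesis of that lemma is satisfied, producing an integral polytope whose $\delta$-vector concentrates at a single middle index and attains equality.

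The main obstacle is the combinatorial sign-and-monotonicity analysis of $C^d_{r,k}$. In the $r=d-3$ case, Lemma \ref{poly} collapses the defining sum into an explicit cubic in $k$ whose sign pattern and successive differences are handled by one quadratic inequality. For general $r$ with $d-r$ odd, $C^d_{r,k}$ is, up to a shift, an antisymmetric polynomial of degree $d-r$ in $k$, so the clean quadratic shortcut is no longer available. I would attempt a uniform approach by induction on $d-r$, or by exploiting the generating identity $c_k(z)=d!\binom{z+k}{d}$ together with the Newton-type expansion used in the proof of Lemma \ref{lem3}, to obtain a closed form for $C^d_{r,k}$ that exposes its real zeros inside $\{1,\ldots,d-2\}$. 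Establishing the correct "one-threshold" sign behavior in the right half-range is the genuinely new difficulty that the paper leaves open; once it is in place, the rest of the argument transfers verbatim from Theorem \ref{d-3}.
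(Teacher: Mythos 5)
The statement you are addressing is labelled a conjecture, and the paper does not prove it either: what the paper offers is Proposition \ref{prop}, which reduces the conjecture (for $d-r$ odd; the cases $d-r$ even and $r\in\{1,2,3,d-3,d-2\}$ being settled by Theorems \ref{3}, \ref{even}, \ref{d-3} and the cited results of Henk--Tagami) to exactly the structural condition you formulate --- a single threshold $t$ such that $C^d_{r,i}\geq C^d_{r,i-1}$ for $i\geq t$ and $N_{r,d}=\min\{-|C^d_{r,i}| : \lceil(d-1)/2\rceil\leq i\leq t-1\}$ --- together with a computational verification of that condition for $d\leq 1000$. Your summation by parts against Hibi's inequality (1.2), and your use of Lemma \ref{higashitani} at the minimizing index $i^{*}\geq\lceil(d-1)/2\rceil$ for sharpness, are the same mechanisms the paper deploys in Theorem \ref{d-3} and Proposition \ref{prop}; so in substance your proposal reconstructs the paper's reduction.

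The genuine gap is the one you name yourself: the analogue of Lemma \ref{kak} for general $r$ with $d-r$ odd. For $r=d-3$, Lemma \ref{poly} collapses $C^d_{d-3,k}$ to the explicit cubic $-\tfrac{1}{48}(d-2)(d-1)d(d-2k-1)(4k^2-4dk+4k+d^2-3d)$, whose sign pattern and successive differences reduce to one quadratic inequality; for smaller $r$ the degree in $k$ grows with $d-r$ and no such closed form or zero-localization is established, in the paper or in your proposal. One further caution: you allow ``a bounded number of additional sign changes controlled by $d-r$,'' but the absorption step in the proof of Theorem \ref{d-3} genuinely requires the one-threshold structure --- each negative contribution is folded into a single multiple of $(d!\textnormal{vol}(\Pc)-1)N_{r,d}$ precisely because the differences $C^d_{r,i}-C^d_{r,i-1}$ are nonnegative uniformly above $t$, and extra sign changes would break that collapse. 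As written, your proposal is a correct and faithful reduction matching Proposition \ref{prop}, but it is not a proof of the conjecture; the conjecture remains open beyond the computationally verified range.
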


We set the sequence $C_{r,d}=(C_{r,\lceil\frac{d-1}{2}\rceil}^d,\ldots,C_{r,d-1}^d)$.
As we see in the proof of Theorem \ref{d-3}, in order to solve Conjecture \ref{conj}, it seems efficient to study the properties of the sequence $C_{r,d}$.
The following proposition may be important to solve this conjecture.

\begin{proposition}
	\label{prop}
Let $\Pc \subset \RR^d, d \geq 3$. 
Assume that $d-r$ is odd and the sequence $C_{r,d}$ satisfies the following condition\textnormal{:}
There exists an integer $t$ with $\lceil(d-1)/2 \rceil+1 \leq t \leq d-1$ such that 
$C_{r,i}^d \geq C_{r,i-1}^d$ for $t \leq i \leq d-1$ and $N_{r,d}=\min\left\{C_{r,i}^d\ : \ \lceil(d-1)/2\rceil \leq i \leq t-1\right\}=\min\left\{-|C_{r,i}^d|:\lceil(d-1)/2\rceil \leq i \leq t-1\right\}$.
Then we have
$$g_{r}(\Pc) \geq \cfrac{1}{d!}\left((-1)^{d-r}\textnormal{stirl}(d+1,r+1)+(d!\textnormal{vol}(\Pc)-1)N_{r,d}\right).$$
And the bound is best possible for any volume.
\end{proposition}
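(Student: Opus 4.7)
The plan is to adapt the blueprint of Theorem \ref{d-3} to the more general hypothesis at hand. First I invoke Lemma \ref{sym} together with $(-1)^{d-r}=-1$ to pair the terms of $d!g_r(\Pc)=\sum_{i=0}^{d}\delta_i C_{r,d-i}^d$ at indices $i$ and $d-i+1$; the middle index contributes nothing when $d$ is odd, because $C_{r,(d-1)/2}^d=0$ by Lemma \ref{sym}. Writing $m=\lfloor d/2\rfloor$, this collapses the identity to
\[
d!g_r(\Pc)=C_{r,d}^d+\sum_{i=1}^{m}(\delta_i-\delta_{d-i+1})\,C_{r,d-i}^d.
\]
Setting $A_k=\sum_{j=1}^{k}(\delta_j-\delta_{d-j+1})$, the Hibi-type inequality (1.2) supplies $A_k\geq 0$ for $1\leq k\leq\lfloor(d+1)/2\rfloor$; the lower bound $t\geq\lceil(d-1)/2\rceil+1$ ensures in particular that $A_{d-t+1}\geq 0$.

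Next I apply Abel summation to rewrite the tail as
\[
A_{m}\,C_{r,d-m}^d+\sum_{i=1}^{m-1}A_i\bigl(C_{r,d-i}^d-C_{r,d-i-1}^d\bigr).
\]
The monotonicity part of the hypothesis, $C_{r,i}^d\geq C_{r,i-1}^d$ for $t\leq i\leq d-1$, translates to $C_{r,d-i}^d-C_{r,d-i-1}^d\geq 0$ for $1\leq i\leq d-t$, so those contributions are non-negative and may be dropped. Reversing Abel summation on what remains (together with the trailing $A_m C_{r,d-m}^d$) yields
\[
d!g_r(\Pc)\geq C_{r,d}^d+A_{d-t+1}\,C_{r,t-1}^d+\sum_{i=d-t+2}^{m}(\delta_i-\delta_{d-i+1})\,C_{r,d-i}^d.
\]

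The heart of the proof is to transfer every surviving coefficient to $N_{r,d}$. Since $C_{r,t-1}^d\geq N_{r,d}$ and $A_{d-t+1}\geq 0$, we have $A_{d-t+1}C_{r,t-1}^d\geq A_{d-t+1}N_{r,d}$. For each $i\in[d-t+2,m]$, the index $d-i$ lies in $[\lceil(d-1)/2\rceil,t-2]$, so the second clause of the hypothesis gives $|C_{r,d-i}^d|\leq|N_{r,d}|$; a short case check on the sign of $\delta_i-\delta_{d-i+1}$ then produces the uniform bound
\[
(\delta_i-\delta_{d-i+1})\,C_{r,d-i}^d\;\geq\;|\delta_i-\delta_{d-i+1}|\,N_{r,d}.
\]
Summing and applying the elementary estimate $A_{d-t+1}+\sum_{i=d-t+2}^{m}|\delta_i-\delta_{d-i+1}|\leq\sum_{i=1}^{d}\delta_i=d!\textnormal{vol}(\Pc)-1$, together with $N_{r,d}\leq 0$, delivers the claimed lower bound.

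For the best-possibility claim, pick $i_0\in[\lceil(d-1)/2\rceil,t-1]$ with $C_{r,i_0}^d=N_{r,d}$ and set $k=d-i_0\in[d-t+1,\lfloor(d+1)/2\rfloor]$; Lemma \ref{higashitani} then supplies, for any prescribed volume, an integral convex polytope $\Pc$ of dimension $d$ whose $\delta$-vector satisfies $\delta_0=1$, $\delta_k=d!\textnormal{vol}(\Pc)-1$, and $\delta_i=0$ otherwise, for which direct substitution gives $d!g_r(\Pc)=C_{r,d}^d+(d!\textnormal{vol}(\Pc)-1)\,N_{r,d}$. The main obstacle is the sign-free inequality $(\delta_i-\delta_{d-i+1})C_{r,d-i}^d\geq|\delta_i-\delta_{d-i+1}|\,N_{r,d}$: the naive sign-split used in Theorem \ref{d-3} rested on $C_{r,d-i}^d\leq 0$ throughout the relevant range, and when $[\lceil(d-1)/2\rceil,t-2]$ harbors positive values of $C_{r,d-i}^d$, only the stronger hypothesis $N_{r,d}=\min\{-|C_{r,i}^d|\}$, with its uniform bound $|C_{r,d-i}^d|\leq|N_{r,d}|$, keeps the estimate from going the wrong way.
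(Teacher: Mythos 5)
Your argument is correct and follows essentially the same route as the paper: the Abel-summation decomposition from Theorem \ref{d-3}, non-negativity of the partial sums $\sum_{j\le k}(\delta_j-\delta_{d-j+1})$ via inequality (1.2), the monotonicity hypothesis to discard the tail, and the two-sided condition $N_{r,d}=\min\{-|C_{r,i}^d|\}$ to control the remaining block, with Lemma \ref{higashitani} giving sharpness. In fact your uniform bound $(\delta_i-\delta_{d-i+1})C_{r,d-i}^d\ge|\delta_i-\delta_{d-i+1}|\,N_{r,d}$ states the key estimate in the case $\delta_i-\delta_{d-i+1}\le 0$ more carefully than the paper's own phrasing, while leading to the same conclusion.
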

\begin{proof}
%We let $\delta(\Pc)=(\delta_0,\ldots,\delta_d)$ be the $\delta$-vector of $\Pc$.
If $\delta_{d-i}-\delta_{i+1} \leq 0$ for $\lceil(d-1)/2\rceil \leq i \leq t-1$,
then since $N_{r,d} \leq -C_{r,i}^d$, we have $(\delta_{d-i}-\delta_{i+1})C_{r,i}^d \geq (\delta_{d-i}-\delta_{i+1})N_{r,d}$.
Also, if $\delta_{d-i}-\delta_{i+1} \geq 0$  for $\lceil(d-1)/2\rceil \leq i \leq t-1$, 
then since $N_{r,d} \leq C_{r,i}^d$, we have $(\delta_{d-i}-\delta_{i+1})C_{r,i}^d \geq (\delta_{d-i}-\delta_{i+1})N_{r,d}$.
Hence since $N_{r,d}\leq 0$, by the same argument of the proof of Theorem \ref{d-3}, we have 
$$g_{r}(\Pc) \geq \cfrac{1}{d!}\left((-1)^{d-r}\textnormal{stirl}(d+1,r+1)+(d!\textnormal{
	vol}(\Pc)-1)N_{r,d}\right).$$
By Lemma \ref{higashitani}, the bound is best possible for any volume.
\end{proof}

In lower dimensions, by computing $C_{r,d}$ we can know whether $C_{r,d}$ satisfies the condition of Proposition \ref{prop}.
In fact, we know from computational experiment that for $d\leq 1000$, $C_{r,d}$ satisfies the condition.
Hence by results of this paper, in particular, by Theorem \ref{even}, the statement of Conjecture \ref{conj} holds for $d \leq 1000$.

\begin{acknowledgement}
	The author would like to thank referees for their helpful comments.
	He would also like to thank Professor Takayuki Hibi and Professor Akihiro Higashitani for helping him in writing this paper.
\end{acknowledgement}

\end{document}